 \newtheorem{thm}{Theorem}[section]
 \newtheorem{cor}[thm]{Corollary}
 \newtheorem{lem}[thm]{Lemma}
 \newtheorem{prop}[thm]{Proposition}
 \theoremstyle{definition}
 \newtheorem{defn}[thm]{Definition}
 \newtheorem{rem}[thm]{Remark}
 \numberwithin{equation}{section}
\begin{document}

\title[Cayley properties of  the line graphs induced by  ...]
 {Cayley properties of  the line graphs induced\\ by consecutive layers of  the hypercube}

%----------Author 1
\author[S.Morteza Mirafzal]{S.Morteza Mirafzal}

\address{Department of Mathematics\\ Lorestan University\\ Khorramabad\\ Iran}

\email{smortezamirafzal@yahoo.com}
\email{mirafzal.m@lu.ac.ir}

\thanks{}
%----------Author 2
%----------classification, keywords, date
\subjclass{Primary 05C25}

\keywords{Middle layer cube, Line graph,  Automorphism
group, $k$-homogeneous permutation group,   Sharply $2$-transitive permutation group,  Cayley graph}

\date{}
%----------additions
%%% ----------------------------------------------------------------------
\begin{abstract}
Let $n >3$ and $  0< k < \frac{n}{2} $  be integers. In this paper, we  investigate some algebraic properties of  the  line graph of the graph $ {Q_n}(k,k+1) $  where $ {Q_n}(k,k+1) $  is the subgraph of the hypercube $Q_n$  which is induced by the set of vertices of weights $k$ and $k+1$. In the first step, we determine the automorphism groups of these graphs for all values of $n,k$. In the second step,
   we   study  Cayley properties of the line graphs of these graphs. In particular,  we show that  if $k\geq 3$  and $  n \neq 2k+1$, then  except for the cases $k=3, n=9$ and $k=3, n=33$, the line graph of the graph $ {Q_n}(k,k+1)  $   is a vertex-transitive non-Cayley graph. Also, we show that the line graph of the graph   $ {Q_n}(1,2) $ is a Cayley graph if and only if $ n$ is  a power of a prime $p$. Moreover, we show that for \lq{}almost all\rq{} even values of    $k$, the line graph of the graph   $ {Q_{2k+1}}(k,k+1) $ is a vertex-transitive non-Cayley graph.
\end{abstract}

\maketitle ----------------------------------------------------------------
\section{ Introduction}
\noindent
 In this paper, a graph $\Gamma=(V,E)$ is
considered as an undirected simple graph where $V=V(\Gamma)$ is the vertex-set
and $E=E(\Gamma)$ is the edge-set. For all the terminology and notation
not defined here, we follow [2,8,9].\

Let $ n \geq 1 $ be an integer. The hypercube  $Q_n$ is the graph whose vertex set is $ \{0,1  \}^n $, where two $n$-tuples  are adjacent if  they differ in precisely one coordinates.
The   hypercube $Q_n$,  has been extensively studied. Nevertheless,
many open questions remain. Harary, Hayes, and Wu [11] wrote a comprehensive survey on hypercube graphs.  In the graph $Q_n$, the layer $L_k$ is the set of vertices which contain $k$ 1's, namely,  vertices of weight $k$, $ 1 \leq k \leq n$.  We denote by $ {Q_n}(k,k+1)$, the subgraph of $Q_n$ induced by layers $L_k$ and $ L_{k+1} $. For $ n=2k+1 $,   the graph $ {Q_{2k+1}}(k,k+1) $ has been investigated from various aspects,  by various authors and is called the middle layer cube [7,11,13,30,31] or regular hyperstar graph (denoted by $HS(2k,k)$) [18,19,23].   It has been conjectured by Dejter, Erd\H{o}s, and Havel [13] among others, that $ Q_{2k+1}(k,k+1) $
is hamiltonian.   Recently, M\"utze [31]  showed that the    middle layer cube $ {Q_{2k+1}}(k,k+1) $ is hamiltonian.\

 Figure 1.  shows the graph  $HS(6,3)$ ($ Q_{5}(2,3) $) in plane. Note that in this figure the set $\{i,j,k\} $ ($\{ i,j \}$) is denoted by $ijk$ ($ij$). \

\definecolor{qqqqff}{rgb}{0.,0.,1.}
\begin{tikzpicture}[line cap=round,line join=round,>=triangle 45,x=.65cm,y=.80cm]
\clip(-4.3,-2.38) rectangle (11.32,6.3);
\draw (-0.9,3.74) node[anchor=north west] {13};
\draw (0.9,5.6) node[anchor=north west] {123};
\draw (4.88,5.3) node[anchor=north west] {124};
\draw (5.7,3.78) node[anchor=north west] {24};
\draw (5.7,1.9) node[anchor=north west] {245};
\draw (5.0,0.66) node[anchor=north west] {45};
\draw (3.0,0.04) node[anchor=north west] {453};
\draw (1.14,0.52) node[anchor=north west] {43};
\draw (-0.8,1.68) node[anchor=north west] {143};
\draw (3.28,5.9) node[anchor=north west] {12};
\draw (4.92,4.76)-- (3.06,5.52);
\draw (3.06,5.52)-- (1.14,4.88);
\draw (1.14,4.88)-- (0.12,3.52);
\draw (0.12,3.52)-- (0.22,1.64);
\draw (0.22,1.64)-- (1.24,0.64);
\draw (1.24,0.64)-- (3.44,0.2);
\draw (3.44,0.2)-- (4.88,0.64);
\draw (5.72,3.38)-- (5.6,3.34);
\draw (4.92,4.76)-- (5.6,3.34);
\draw (5.72,1.68)-- (5.66,1.82);
\draw (5.66,1.82)-- (5.6,3.34);
\draw (5.66,1.82)-- (4.88,0.64);
\draw (3.96,3.8)-- (3.24,4.48);
\draw (3.96,3.8)-- (4.02,1.8);
\draw (4.02,1.8)-- (3.44,1.28);
\draw (1.64,3.54)-- (1.66,1.82);
\draw (4.86,3.38)-- (4.8,1.88);
\draw (0.7,3.82) node[anchor=north west] {23};
\draw (0.6,2.12) node[anchor=north west] {234};
\draw (4.68,3.86) node[anchor=north west] {14};
\draw (4.3,1.8) node[anchor=north west] {145};
\draw (3.32,5.) node[anchor=north west] {125};
\draw (3.5,4.48) node[anchor=north west] {25};
\draw (3.52,2.44) node[anchor=north west] {235};
\draw (2.84,1.18) node[anchor=north west] {35};
\draw (1.96,1.4) node[anchor=north west] {135};
\draw (2.0,4.2) node[anchor=north west] {15};
\draw (4.86,3.38)-- (0.22,1.64);
\draw (5.6,3.34)-- (1.66,1.82);
\draw (2.62,3.54)-- (2.58,3.54);
\draw (2.58,3.54)-- (3.24,4.48);
\draw (2.7,1.66)-- (2.68,1.46);
\draw (2.68,1.46)-- (2.58,3.54);
\draw (3.44,1.28)-- (2.68,1.46);
\draw (1.64,3.54)-- (4.02,1.8);
\draw (1.66,1.82)-- (1.24,0.64);
\draw (2.68,1.46)-- (0.12,3.52);
\draw (1.64,3.54)-- (1.14,4.88);
\draw (2.58,3.54)-- (4.8,1.88);
\draw (3.24,4.48)-- (3.06,5.52);
\draw (4.92,4.76)-- (4.86,3.38);
\draw (3.96,3.8)-- (5.66,1.82);
\draw (4.8,1.88)-- (4.88,0.64);
\draw (3.44,1.28)-- (3.44,0.2);
\draw (-2.2,-0.6) node[anchor=north west] {Figure 1. The regular hyperstar graph  HS(6,3)};
\begin{scriptsize}
\draw [fill=qqqqff] (0.12,3.52) circle (1.5pt);
\draw [fill=qqqqff] (0.22,1.64) circle (1.5pt);
\draw [fill=qqqqff] (1.24,0.64) circle (1.5pt);
\draw [fill=qqqqff] (3.44,0.2) circle (1.5pt);
\draw [fill=qqqqff] (4.88,0.64) circle (1.5pt);
\draw [fill=qqqqff] (1.14,4.88) circle (1.5pt);
\draw [fill=qqqqff] (3.06,5.52) circle (1.5pt);
\draw [fill=qqqqff] (4.92,4.76) circle (1.5pt);
\draw [fill=qqqqff] (5.6,3.34) circle (1.5pt);
\draw [fill=qqqqff] (5.66,1.82) circle (1.5pt);
\draw [fill=qqqqff] (1.64,3.54) circle (1.5pt);
\draw [fill=qqqqff] (1.66,1.82) circle (1.5pt);
\draw [fill=qqqqff] (4.86,3.38) circle (1.5pt);
\draw [fill=qqqqff] (4.8,1.88) circle (1.5pt);
\draw [fill=qqqqff] (3.24,4.48) circle (1.5pt);
\draw [fill=qqqqff] (3.44,1.28) circle (1.5pt);
\draw [fill=qqqqff] (3.96,3.8) circle (1.5pt);
\draw [fill=qqqqff] (4.02,1.8) circle (1.5pt);
\draw [fill=qqqqff] (2.58,3.54) circle (1.5pt);
\draw [fill=qqqqff] (2.68,1.46) circle (1.5pt);
\end{scriptsize}
\end{tikzpicture}

The study of vertex-transitive graphs has a long and rich history in discrete
mathematics. Prominent examples of vertex-transitive graphs are Cayley graphs
which are important in both theory as well as applications. Vertex-transitive graphs
that are not Cayley graphs, for which we use the abbreviation VTNCG,
have been an object of a systematic study since   1979 [3,10].
In trying to recognize
whether or not a vertex-transitive graph is a Cayley graph, we are left with
the problem of determining whether the automorphism group contains a regular
subgroup [2]. The reference   [1] is an  excellent  source for studying graphs that are VTNCG.\
In particular, determining the automorphism group of a given graph can be very useful in determining whether this graph is a Cayley graph. In this area of research, in algebraic graph theory,  there are various works and  some of  the recent  papers  in this scope are  [3,10,14,15,16,22,23,24,25,27,28,29]. \
In this paper, we   investigate some algebraic properties of  the  line graph of the graph $ {Q_n}(k,k+1) $, in particular, we   study cayleyness of this graph.

 We can consider the graph $Q_n$ from another point of view. The Boolean lattice $BL_n, n \geq 1$, is the graph whose vertex set is the set of all subsets of $[n]= \{ 1,2,...,n \}$, where two subsets $x$ and $y$ are adjacent if their symmetric difference has precisely one element.  In the graph $BL_n$, the layer $L_k$ is the set of $k$-subsets of $[n]$.  We denote by $ {BL_n}(k,k+1)$, the subgraph of $BL_n$ induced by layers $L_k$ and $ L_{k+1} $.

 Note that  if $A$ is  a subset of $[n]$,  then the
characteristic function of $A$ is the function $ \chi_{A} :[n]
\longrightarrow \{0,1\}$,  with the rule   $\chi_{A}(x) =1$, if and only if
$x\in A$. We now can show that the mapping  $\chi :V(BL_n)  \longrightarrow V(Q_n),  $ defined by the rule, $\chi ( A)= \chi_{A},  $  is a graph isomorphism. Now, it is clear that the graph $Q_n$ is isomorphic with the graph $BL_n$, by an isomorphism that induces an isomorphism from ${BL_n}(k,k+1)$  to  $ {Q_n}(k,k+1)$.   For this reason, in the sequel,  we work on the graph  ${BL_n}(k,k+1)$ and for abbreviation, we denote it by $ B(n,k)  $.  We know that $ n \choose k$ =$ n \choose n-k$, so $ B(n,k)   \cong   B(n,n-k) $.   Therefore, in the sequel we assume that $ k < \frac {n}{2}  $.

\section{Preliminaries}

 The group of all permutations of a set $V$ is denoted by S$ym(V)$  or
just S$ym(n)$ when $ | V | =n $. A $permutation\  group$ $G$ on
$V$ is a subgroup of S$ym(V)$. In this case we say that $G$ act
on $V$. If $\Gamma$ is a graph with vertex-set $V$, then we can view
each automorphism of $\Gamma$ as a permutation of $V$, and so $Aut(\Gamma)$ is a
permutation group. Let the group $G$ act  on $V$, we say that $G$ is
$transitive$ (or $G$ acts $transitively$  on $V$)   if there is just
one orbit. This means that given any two element $u$ and $v$ of
$V$, there is an element $ \beta $ of  $G$ such that  $\beta (u)= v
$.

The graph $\Gamma$ is called $vertex$-$transitive,$  if  $Aut(\Gamma)$
acts transitively on $V(\Gamma)$. The action of $Aut(\Gamma)$ on
$V(\Gamma)$ induces an action on $E(\Gamma)$, by the rule
$\beta\{x,y\}=\{\beta(x),\beta(y)\}$,  $\beta\in Aut(\Gamma)$, and
$\Gamma$ is called $edge$-$transitive$ if this action is
transitive. The graph $\Gamma$ is called $symmetric,$  if  for all
vertices $u, v, x, y,$ of $\Gamma$ such that $u$ and $v$ are
adjacent, and $x$ and $y$ are adjacent, there is an automorphism
$\alpha$ such that $\alpha(u)=x$,   and $ \alpha(v)=y$. It is clear
that a symmetric graph is vertex-transitive and edge-transitive.

For $v\in V(\Gamma)$ and $G=Aut(\Gamma)$, the stabilizer subgroup
$G_v$ is the subgroup of $G$ containing all automorphisms which
fix $v$. In the vertex-transitive case all stabilizer subgroups
$G_v $ are conjugate in $G$, and consequently are  isomorphic. In this
case, the index of $G_v$ in $G$ is given by the equation,  $| G
: G_v | =\frac{| G |}{| G_v |} =| V(\Gamma)|
$. If each stabilizer $ G_v $ is the identity group, then every
element of $G$, except the identity, does not fix any vertex, and
we say that $G$ acts $semiregularly$ on $V$. We say that $G$ act
$regularly$ on $V$ if and only if $G$ acts  transitively and
semiregularly on $ V$,
 and in this case we
have $| V | = | G |$.\\
Let  $n,k \in \mathbb{ N}$ with $ k < n,   $ and let $[n]=\{1,...,n\}$. The $Johnson\  graph$ $J(n,k)$ is defined as the graph whose vertex set is $V=\{v\mid v\subseteq [n], |v|=k\}$ and two vertices $v$,$w, $ are adjacent if and only if $|v\cap w|=k-1$.   The Johnson\ graph $J(n,k)$ is a  vertex-transitive graph  [9].   It is an easy task to show that the set  $H= \{ f_\theta \ | \  \theta \in$ S$ym([n]) \} $,  $f_\theta (\{x_1, ..., x_k \}) = \{ \theta (x_1), ..., \theta (x_k) \} $,    is a subgroup of $ Aut( J(n,k) ) $ [9].   It has been shown that   $Aut(J(n,k)) \cong$ S$ym([n])$, if  $ n\neq 2k, $  and $Aut(J(n,k)) \cong$ S$ym([n]) \times \mathbb{Z}_2$, if $ n=2k$,   where $\mathbb{Z}_2$ is the cyclic group of order 2 [5,28].

Let $G$ be any abstract finite group with identity $1$, and
suppose $\Omega$ is a set of   $G$, with the
properties:

(i) $x\in \Omega \Longrightarrow x^{-1} \in \Omega$,   $ \ (ii)
 \ 1\notin \Omega $.

The $Cayley\  graph$  $\Gamma=\Gamma (G; \Omega )$ is the (simple)
graph whose vertex-set and edge-set defined as follows:

$V(\Gamma) = G $,  $  E(\Gamma)=\{\{g,h\}\mid g^{-1}h\in \Omega \}$.
It can be shown that a  connected graph $\Gamma$ is a Cayley graph
if and only if $Aut(\Gamma)$ contains a subgroup $H$,  such that
$H$ acts regularly on $V(\Gamma)$ [2, chapter 16].
\section{Main results}
\begin{defn}
Let $ n \geq 4 $ be an integer and $ [n] = \{1,2,..., n \} $. Let $ k$ be an integer such that
$1\leq k <\frac{n}{2}$. The graph $ B(n,k)$ is a
 graph with the vertex set  $V=\{v \  |  \ v \subset [n] ,  | v |  \in \{ k,k+1  \} \} $ and the
edge set $ E= \{ \{ v , w \} \  | \  v , w \in V , v \subset w $ or $ w \subset v \} $.
\end{defn}
According to the Definition 3.1.  Figure 2.  shows  $ B(5,1)$    in   the plane.\

\definecolor{qqqqff}{rgb}{0.,0.,1.}
\begin{tikzpicture}[line cap=round,line join=round,>=triangle 45,x=-.55cm,y=.8cm]
\clip(-4.46,-0.04) rectangle (10.9,4.78);
\draw (-2.,4.)-- (-4.32,1.98);
\draw (-2.,4.)-- (-2.7,1.98);
\draw (-2.,4.)-- (-1.22,2.);
\draw (-2.,4.)-- (0.24,2.);
\draw (0.,4.)-- (-4.32,1.98);
\draw (0.,4.)-- (1.56,2.);
\draw (0.,4.)-- (3.18,2.02);
\draw (0.,4.)-- (4.56,1.96);
\draw (2.,4.)-- (-2.7,1.98);
\draw (2.,4.)-- (1.56,2.);
\draw (2.,4.)-- (6.,2.);
\draw (2.,4.)-- (7.38,1.98);
\draw (4.,4.)-- (-1.22,2.);
\draw (4.,4.)-- (3.18,2.02);
\draw (4.,4.)-- (6.,2.);
\draw (4.,4.)-- (8.54,1.98);
\draw (6.,4.)-- (0.24,2.);
\draw (6.,4.)-- (4.56,1.96);
\draw (6.,4.)-- (7.38,1.98);
\draw (6.,4.)-- (8.54,1.98);
\draw (4.0,0.95) node[anchor=north west] {Figure 2.  B(5,1)};
\begin{scriptsize}
\draw [fill=qqqqff] (-2.,4.) circle (1.5pt);
\draw[color=qqqqff] (-1.84,4.48) node {$ 1 $};
\draw [fill=qqqqff] (0.,4.) circle (1.5pt);
\draw[color=qqqqff] (0.18,4.4) node {$2$};
\draw [fill=qqqqff] (2.,4.) circle (1.5pt);
\draw[color=qqqqff] (2.14,4.4) node {$3$};
\draw [fill=qqqqff] (4.,4.) circle (1.5pt);
\draw[color=qqqqff] (4.18,4.42) node {$4$};
\draw [fill=qqqqff] (6.,4.) circle (1.5pt);
\draw[color=qqqqff] (6.2,4.42) node {$5$};
\draw [fill=qqqqff] (-4.32,1.98) circle (1.5pt);
\draw[color=qqqqff] (-4.34,1.56) node {$12$};
\draw [fill=qqqqff] (-2.7,1.98) circle (1.5pt);
\draw[color=qqqqff] (-2.7,1.64) node {$13$};
\draw [fill=qqqqff] (-1.22,2.) circle (1.5pt);
\draw[color=qqqqff] (-1.28,1.66) node {$14$};
\draw [fill=qqqqff] (0.24,2.) circle (1.5pt);
\draw[color=qqqqff] (0.22,1.68) node {$15$};
\draw [fill=qqqqff] (1.56,2.) circle (1.5pt);
\draw[color=qqqqff] (1.56,1.66) node {$23$};
\draw [fill=qqqqff] (3.18,2.02) circle (1.5pt);
\draw[color=qqqqff] (3.18,1.72) node {$24$};
\draw [fill=qqqqff] (4.56,1.96) circle (1.5pt);
\draw[color=qqqqff] (4.58,1.66) node {$25$};
\draw [fill=qqqqff] (6.,2.) circle (1.5pt);
\draw[color=qqqqff] (5.96,1.66) node {$34$};
\draw [fill=qqqqff] (7.38,1.98) circle (1.5pt);
\draw[color=qqqqff] (7.32,1.64) node {$35$};
\draw [fill=qqqqff] (8.54,1.98) circle (1.5pt);
\draw[color=qqqqff] (8.54,1.66) node {$45$};
\end{scriptsize}
\end{tikzpicture} \
  \newline
Note that in the above figure $i= \{  i\}, ij= \{ i,j \}$. \

\

\begin{rem} In the sequel,  we denote every set $ \{ x_1,x_2,  ... ,x_t   \} $ by $ x_1 x_2 ... x_t   $. \
\end{rem}
\

We see that in  $ \Gamma = B(n,k)$, if $ v = x_1 ... x_k \in P_1 = \{ v \ | \  v\subset [n], |
 v |= k \} $, then
 $$ N ( v ) = \{ x_1. .. x_ky_1, ..., x_1 ... x_ky_{n-k} \}, $$
 where $  \{ x_1,. .. ,x_k, y_1, ..., y_{n-k}\} = [n] =\{1,...,n \} $.
 Hence,  $ deg (v) = | N (v) | = n-k $.
 On the other hand,  if $ w = x_1 ...x_k x_{k+1} \in P_2 = \{ v \  | \
 v  \subset  [n], | v | = k + 1 \} $,  then 
$$ N ( w ) = \{ u \mid u \subset w,  | u | =k \}, $$
 and hence $| N ( w )| = deg ( w ) = k+1 $.  Therefore, if $ k \neq \frac { n-1 } {2 }$,
then we have $ k+1 \neq n-k $, and thus if $ k \neq \frac { n-1 } {2 }, $  then the graph $ B(n,k)$ is not a regular graph.\

 Since every vertex of $ B(n,k)$ which is in $ P_1 $ is of
 degree $ n-k $ and $ | P_1 | = { n \choose k } $ then
 the number of edges of $ B(n,k)$ is $ ( n - k ){ n \choose k } $ and
 the number of vertices of $ B(n,k)$ is  $ { n \choose k } + { n \choose k+1 } $.

\begin{prop}
The graph $ B(n,k) $ is bipartite and connected.

\end{prop}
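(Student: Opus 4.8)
The plan is to establish the two assertions separately, both by elementary arguments.

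\textbf{Bipartiteness.} Recall that $V(\Gamma)=P_1\cup P_2$ with $P_1=\{v\subset[n]\mid |v|=k\}$ and $P_2=\{v\subset[n]\mid |v|=k+1\}$, and that the two sets are disjoint since a set cannot have two distinct cardinalities. By Definition 3.1, every edge $\{v,w\}$ of $\Gamma$ satisfies $v\subset w$ or $w\subset v$; in either case one endpoint has size $k$ and the other has size $k+1$, so one endpoint lies in $P_1$ and the other in $P_2$. Hence $\{P_1,P_2\}$ is a bipartition of $\Gamma$, and $\Gamma$ is bipartite. (There are no edges inside $P_1$ or inside $P_2$, so no odd cycles.)

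\textbf{Connectivity.} First observe that any $w=x_1\cdots x_{k+1}\in P_2$ is adjacent to the $k$-set $x_1\cdots x_k\in P_1$ (here we use $k\geq1$, so $(k+1)$-sets really do admit $k$-subsets); thus every vertex of $P_2$ lies in the same component as some vertex of $P_1$, and it suffices to connect any two vertices $A,B\in P_1$. For this I use the Johnson graph $J(n,k)$: if $A,A'\in P_1$ are adjacent in $J(n,k)$, i.e. $|A\cap A'|=k-1$, then $|A\cup A'|=k+1$, so $A\cup A'\in P_2$ and $A-(A\cup A')-A'$ is a path of length $2$ in $\Gamma$. Hence adjacency in $J(n,k)$ implies being in a common component of $\Gamma$, and it remains to note that $J(n,k)$ is connected. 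The latter follows by induction on $d=|A\triangle B|$: if $A\neq B$, pick $x\in A\setminus B$ and $y\in B\setminus A$ (both nonempty since $|A|=|B|$), set $A''=(A\setminus\{x\})\cup\{y\}$; then $A$ and $A''$ are adjacent in $J(n,k)$ while $|A''\triangle B|=d-2$, so an $A$--$B$ walk in $J(n,k)$ exists, and lifting each of its steps as above yields an $A$--$B$ walk in $\Gamma$. Therefore $\Gamma=B(n,k)$ is connected.

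\textbf{On the difficulty.} There is essentially no obstacle here; the proof is purely combinatorial. The only points needing (minor) attention are that the standing hypotheses $n\geq4$ and $1\leq k<\tfrac n2$ ensure $P_1\neq\varnothing$, $P_2\neq\varnothing$, and $k+1\leq n$, so that the sets used above are genuine vertices of $\Gamma$; and that the element-swap step strictly decreases the symmetric-difference distance in $J(n,k)$, which is what makes the induction terminate. One could equally phrase the connectivity argument without explicit reference to $J(n,k)$, by directly transforming $A$ into $B$ one element at a time through alternating insertions and deletions, but routing through $J(n,k)$ keeps the exposition short and will also be convenient later.
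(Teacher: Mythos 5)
Your proof is correct and follows essentially the same route as the paper: the bipartition argument is identical, and your connectivity argument (reduce to $P_1$, join two $k$-sets with $|A\cap A'|=k-1$ by the length-$2$ path through $A\cup A'$, and induct on the symmetric difference via single-element swaps) is exactly the paper's induction on $t$ with $|x\cap y|=k-t$, merely phrased through the Johnson graph $J(n,k)$. No gaps.
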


\begin{proof} Let $ P_1 = \{  v\ |\  v  \in V(B(n,k)), |v| =k\}  $ and $ P_2 = \{  v \ |  \ v \in  V(B(n,k)), |v|$ =$ k+1\}   $. Then from the definition of the graph $B(n,k)$
 it follows that  $ V = V(B(n,k)) = P_1 \cup P_2 $,
 $ P_1 \cap P_2  = \emptyset $ and every edge $ e=\{ x , y \} $ of $ B(n,k) $ is
 such that only one of  $ x $ or $ y $ is in $ P_1 $ and the other is in $ P_2 $. \

 We now show that $ B(n,k) $ is a connected graph. Let $ x , y $ be two vertices of $B(n,k) $.
 In the first step,  let  $ x , y $  be  in $ P_1 $. Let $ x=x_1 x_2 ...x_k $, $y=y_1y_2 ...y_k $
and $ | x_1 ...x_k \cap y_1...y_k| = k-t $, $ 0 \leq t \leq k-1 $. We can show by induction on $ t $
that $ d ( x , y) \leq 2t $, where $ d ( x , y) $ is the distance of vertices $ x $  and $ y $
 in $ B(n,k) $. Let $ | x \cap y |= | x_1 ...x_k \cap y_1 ...y_k |=k-1 $, then
 we have $ x = x_1 ...x_{k-1} u $, $ y= x_1 ...x_{k-1} v $,
 for some $ u, v \in [n] = \{ 1,...n \} $, $ v \neq u $. Now if $ z = x_1 ...x_{k-1} u v $ then $ P : x, z, y $ is
 a path between $ x $ and $ y $ and we have $ d(x, y )= 2 = 2t $,  for $ t = 1 $. \

Now, suppose that the assertion is true for $t= m $,  where $ 1 \leq m < k-1 $. \
 Let $ |x\cap y | =k-( m+1)$. Let $ x = x_1 ... x_{k-m-1} u_1 ... u_{m+1} $, $ y = x_1 ... x_{k-m-1} v_1 ... v_{m+1} $.
Then  for the vertex,  $ z_1= x_1 ... x_{k-m-1}$$ u_1 ... u_m v_1 $,   we have $ |z_1 \cap y | =k-m,$  $| z_1 \cap x| =k-1 $,  hence
 by the assumption of induction we have $ d(x, z_1 )=2,  $ and $ d(z_1, y)=2m $, therefore
 $ d(x, y)  \leq  d(x, z_1)  +  d(z_1, y) = 2 + 2m = 2(m+1)  $(in fact,  we can show that in this case,  $ d(x, y)= 2(m+1) $). \

In the second step,  let $ x \in P_1 $ and $ y \in P_2 $.
 If $ x= x_1 x_2 ... x_k $, $ y= y_1 y_2 ... y_k y_{k+1} $,
 then $ z= y_1 ... y_k \in P_1 $ and $ z $ is adjacent to $ y. $  Now,  by
 what we have seen in the first step,  there is a path between $ x $ and $ z $ in $ B(n,k)$, and
 therefore there is path between $ x $ and $ y $  in $ B(n,k) $. \

  In the last step,  let $ x, y \in P_2 $. If $ x = x_1...x_k x_{k+1} $, $ y = y_1...y_k x_{k+1} $.
Then,  for $ z = x_1 ...x_k $ we have $ z \in P_1 $ and $ z $ is adjacent to $ x $, and  so according to the
 second step,  there is a path between $ y $ and $ z $, and  therefore there
 is a path between $ x $ and $ y $.
\end{proof}

By the method which we used in the proof of Proposition 3.3.  we can deduce the following result.
\

\

\begin{cor}
Let $ D $ be the diameter of the graph $ B(n,k). $  If $n \neq 2k+1,$ then $ D=2(k+1)=2k+2 $. If $n=2k+1, $ then $D=2k+1$.
\end {cor}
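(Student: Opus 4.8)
The plan is to compute the exact distance between any two vertices of $B(n,k)$ and then maximise it. Writing $u\triangle v$ for the symmetric difference, I claim that $d(u,v)=|u\triangle v|$ for all $u,v\in V(B(n,k))$. Once this is established the diameter is simply $\max\{\,|u\triangle v| : u,v\in V(B(n,k))\,\}$, and computing this maximum is a short counting exercise on set sizes.

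For the distance formula I would refine the technique used in Proposition 3.3. The lower bound $d(u,v)\ge|u\triangle v|$ is immediate: along any walk $u=x_0,x_1,\dots,x_m=v$ each $x_{i+1}$ is obtained from $x_i$ by inserting or deleting a single element, and relative to the fixed target $v$ this changes $|x_i\triangle v|$ by exactly $\pm1$; since $|x_0\triangle v|=|u\triangle v|$ and $|x_m\triangle v|=0$, we get $m\ge|u\triangle v|$. For the matching upper bound I would construct a path of length exactly $|u\triangle v|$ greedily: whenever the current set $x$ has size $k$, insert an element of $v\setminus x$; whenever it has size $k+1$, delete an element of $x\setminus v$. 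A quick check shows the required move is always available as long as $x\neq v$ (if $|x|=k$ and $x\neq v$ then $v\setminus x\neq\emptyset$, and if $|x|=k+1$ and $x\neq v$ then $x\setminus v\neq\emptyset$), that each such move decreases $|x\triangle v|$ by one, and that the cardinalities remain in $\{k,k+1\}$; hence the procedure reaches $v$ in exactly $|u\triangle v|$ steps. This yields the formula for all pairs at once, with $|u\triangle v|=2(k-|u\cap v|)$ when $u,v\in P_1$, $|u\triangle v|=2(k+1-|u\cap v|)$ when $u,v\in P_2$, and $|u\triangle v|=2k+1-2|u\cap v|$ (necessarily odd) when one endpoint lies in each part.

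It then remains to maximise $|u\triangle v|$ over the three types of pairs. Since $k<\tfrac n2$ forces $n\ge 2k+1$, two disjoint $k$-subsets of $[n]$ exist and a $k$-subset disjoint from a $(k+1)$-subset exists, so the maxima over $P_1\times P_1$ and over $P_1\times P_2$ are $2k$ and $2k+1$ respectively, irrespective of $n$. For two $(k+1)$-subsets $u,v$ one has $|u\cap v|\ge\max(0,\,2(k+1)-n)$, and this lower bound equals $0$ exactly when $n\ge 2k+2$, i.e. exactly when $n\neq 2k+1$. Consequently, if $n\neq 2k+1$ the maximum over $P_2\times P_2$ is $2(k+1)=2k+2$, whence $D=\max\{2k,\,2k+1,\,2k+2\}=2k+2$, attained by a disjoint pair of $(k+1)$-subsets; while if $n=2k+1$ the maximum over $P_2\times P_2$ is only $2k$, so $D=\max\{2k,\,2k,\,2k+1\}=2k+1$, attained by a $k$-subset and a $(k+1)$-subset that partition $[n]$.

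The substance of the argument is the uniform distance formula; the subsequent maximisation is routine. The one point to watch is the case $n=2k+1$: there the most spread-out pair of $(k+1)$-subsets must still share an element, which is exactly why the diameter drops from $2k+2$ to $2k+1$, so the boundary value $n=2k+2$ (the first $n$ for which two $(k+1)$-subsets can be made disjoint) deserves an explicit check against the stated formula.
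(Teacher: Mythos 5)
Your proof is correct. The paper itself gives no detailed argument for this corollary --- it is stated as following from ``the method used in the proof of Proposition 3.3'', i.e.\ the induction on $t=k-|x\cap y|$ that builds explicit paths and hence yields only the upper bounds $d(x,y)\le 2t$ for pairs in $P_1$ (with the matching equality asserted only in a parenthetical aside), together with the routing of mixed pairs through a neighbour. Your route is the same in spirit --- compute pairwise distances, then maximise --- but it is both more uniform and more complete: the single formula $d(u,v)=|u\triangle v|$ replaces the paper's three case-by-case path constructions, and, more importantly, your observation that every edge of a walk changes $|x_i\triangle v|$ by exactly $\pm1$ supplies the lower bound on distances that the paper's sketch never makes explicit, yet which is indispensable for concluding $D=2k+2$ (resp.\ $D=2k+1$) rather than merely $D\le 2k+2$. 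Your final case analysis --- two $(k+1)$-subsets of $[n]$ can be disjoint if and only if $n\ge 2k+2$, while under the standing hypothesis $k<\frac n2$ disjoint witnesses for the other two types of pairs always exist --- isolates precisely the point where the two values of the diameter separate, and the boundary check at $n=2k+2$ that you flag is exactly the right thing to verify.
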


We know that  every vertex-transitive graph is a  regular graph,     so if $ \Gamma $ is not a regular graph,  then $ \Gamma $ is
not a vertex-transitive graph. Thus, if $ n\neq 2k+1$, then $B(n,k)$ is not a vertex-transitive graph.  \

Let $ V= V(B(n,k)) $ be the vertex set of $ B(n,k) $.
  Then,  for each
$\sigma \in$  $Sym([n]) $, the mapping,
 $$ f_{ \sigma } :V\longrightarrow V,  \
  f_{ \sigma }(v) =\{\sigma ( x )| \  x \in v \}, \  v\in V, $$
 is a bijection of $ V $  and  $f_{ \sigma }$ is an automorphism
 of the graph $ B(n,k)  $. In fact,  for each edge $ e =\{  v,w \}$=$\{x_1 ...x_k, x_1...x_k x_{k+1} \} $, we have, $$ f_{ \sigma } ( e ) =\{f_{ \sigma }(v),f_{ \sigma }(w)   \} =   \{{\sigma} ( x_1 )...{\sigma} ( x_k ), {\sigma} ( x_1 )...{\sigma} ( x_k ) {\sigma} ( x_{k+1} ) \},  $$
 and consequently $f_{ \sigma } ( e ) $ is an edge of $ B(n,k)  $. Similarly, if $ f = \{ x, y \} $ is
 not an edge of  $B(n,k),  $
 then $ f_{ \sigma } ( f ) = \{ f_{ \sigma }(x),f_{ \sigma }(y) \} $ is not an
 edge of $ B(n,k) $. Therefore,  if $ H =\{ f_{ \sigma } \mid { \sigma } \in$ S$ym ( [n] ) \} $,
 then $ H $ is a subgroup of the group $ G = Aut (B(n,k) ) $. In fact, we show that if $n \neq 2k+1$, then $ Aut ( B(n,k)) = H, $
  and if $n=2k+1$, then $ Aut ( B(n,k)) = H \times \mathbb{Z}_2 $, where $\mathbb{Z}_2$ is the cyclic group of order 2.
 It is  clear   that $ H \cong$ S$ym ([n]) $.

 \begin{prop} If $ \Gamma =B(n,k) $,
  then $ \Gamma $ is edge-transitive. Moreover,  if $n=2k+1$, then $ \Gamma $ is vertex-transitive.

\end{prop}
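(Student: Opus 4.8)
The plan is to establish edge-transitivity using only the subgroup $H=\{f_\sigma \mid \sigma\in Sym([n])\}$ of $G=Aut(\Gamma)$ already exhibited, and then, in the case $n=2k+1$, to adjoin to $H$ one further automorphism — complementation of subsets — and to show that the resulting group acts transitively on $V(\Gamma)$.

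First I would handle edge-transitivity. Every edge of $\Gamma=B(n,k)$ has the form $e=\{v,w\}$ with $v\in P_1$, $w\in P_2$ and $v\subset w$; write $v=x_1\cdots x_k$ and $w=x_1\cdots x_k x_{k+1}$, so that $x_1,\dots,x_{k+1}$ are distinct elements of $[n]$. Given a second edge $e'=\{v',w'\}$ with $v'=y_1\cdots y_k$ and $w'=y_1\cdots y_k y_{k+1}$, I would pick any $\sigma\in Sym([n])$ with $\sigma(x_i)=y_i$ for $1\le i\le k+1$; such a $\sigma$ exists because both $(k+1)$-families consist of distinct elements, so the partial assignment is injective and can be completed by any bijection between the two $(n-k-1)$-element complements. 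Then $f_\sigma(v)=v'$ and $f_\sigma(w)=w'$, hence $f_\sigma(e)=e'$. Therefore $H$, and a fortiori $G$, acts transitively on $E(\Gamma)$.

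Next, for $n=2k+1$, I would introduce the map $c\colon V\to V$ defined by $c(v)=[n]\setminus v$. Since $n=2k+1$, a $k$-subset is sent to an $(n-k)=(k+1)$-subset and a $(k+1)$-subset to a $k$-subset, so $c$ is a well-defined bijection which interchanges $P_1$ and $P_2$ (in particular $c$ is an involution). It is a graph automorphism: for $v\in P_1$ and $w\in P_2$ one has $v\subset w \iff [n]\setminus w\subset [n]\setminus v \iff c(w)\subset c(v)$, so $\{v,w\}\in E(\Gamma)$ exactly when $\{c(v),c(w)\}\in E(\Gamma)$; hence $c\in G$. Now $H$ is transitive on $P_1$ and transitive on $P_2$ separately, because $Sym([n])$ is transitive on the $k$-subsets and on the $(k+1)$-subsets of $[n]$, while $c$ carries $P_1$ onto $P_2$. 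Consequently $\langle H,c\rangle$ is transitive on $V(\Gamma)=P_1\cup P_2$, and $\Gamma$ is vertex-transitive.

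I do not expect a genuine obstacle: the argument reduces to writing down the two types of automorphisms and checking them. The only point where the hypothesis $n=2k+1$ is really used is to guarantee that $c$ maps $V$ into $V$, i.e. that $n-k=k+1$; for $n\neq 2k+1$ the graph $B(n,k)$ is not regular (as observed above), so vertex-transitivity fails and the extra hypothesis is unavoidable.
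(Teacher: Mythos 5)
Your proposal is correct and follows essentially the same route as the paper: edge-transitivity via a permutation $\sigma\in Sym([n])$ matching the $(k+1)$ elements of one edge to those of the other, and vertex-transitivity for $n=2k+1$ via the complementation involution $v\mapsto [n]\setminus v$ combined with the transitivity of $Sym([n])$ on $k$-subsets. The only cosmetic difference is that you phrase the final step as transitivity of $\langle H,c\rangle$ on $P_1\cup P_2$, while the paper writes out the case analysis $(\alpha f_\theta)(v)=w$ explicitly; the content is identical.
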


\begin{proof}\ If $ e_1 = \{ x_1 ...x_k, x_1 ...x_kx_{k+1} \}, \  e_2 = \{y_1 ...y_k, y_1 ... y_k y_{k+1} \} $
are edges of $ \Gamma $,  then we define  the mapping   \\
 $$ \theta = { {x_1, ... ,x_k ,  x_{k+1}, u_1,  ...,  u_{n-k-1}} \choose  { y_1, ..., y_k,  y_{k+1}, v_1, ..., v_{n-k-1}}},  $$ \\
 where $ \{ x_1, ... x_{ k+1 }, u_1, ..., u_{n-k-1} \} = \{ 1, ...,n \} = \{ y_1, ..., y_{k+1}, v_1,..., v_{n-k-1} \} $.\newline
It is an easy task to  show that $ \theta   \in$ S$ym ( [n] ) $. Therefore,
 $ f_{\theta} \in S= \{ f_{\sigma } \mid \sigma \in$ S$ym ( [n] )\} \leq Aut ( \Gamma ) $,
 and we have $ f_{\theta}(e_1) = e_2 $. \

We now assume that $n=2k+1$. For each  vertex $v$ in $V=V(B(n,k))$,  let $v^c$ be the complement of the
  set $v$ in $[n]$. We define the mapping  $\alpha :V \longrightarrow V $ by the
  rule,  $ \alpha(v) =v^c $, for every $v$ in $V$. Since the complement of a $k$-subset of the set $[n]$ is a $(k+1)$-subset of $[n]$, then $\alpha$ is a well-defined mapping. We can  see, by an easy argument that $ \alpha
  $ is an automorphism of $ B(n,k) $, namely,  $ \alpha \in Aut(B(n,k)) $, also    $\alpha$ is of order 2. We  recall that the  the group S$ym([n])$ acts   transitively  on the set of all $k$-subsets of $[n]$.
Let $v,w$ are vertices in $B(n,k)$. If $v,w$ are $k$-subsets of $[n]$, then there is some $\theta \in$ S$ym([n])$ such that $f_{\theta}(v) =w$. If $v$ is a $k$-subset and $w$ is a $k+1$-subset of $[n]$, then $\alpha(w) $ is a $k$-subset of $[n]$, and hence there is some $ f_{\theta} \in Aut(B(n,k))$ such that $f_{\theta}(v)=\alpha(w)$, and thus $ (\alpha f_{\theta})(v)=w $.
\end{proof}
\

In the sequel,  we  determine
 $ Aut (B(n,k)) $, the automorphism group of the graph $ B(n,k) $.

\begin{lem} Let $n$ and $k$ be integers with  $\frac{n}{2}> k\geq1$, and
let $\Gamma= (V,E)= B(n,k)$,  with $V=P_1 \cup P_2 $, $P_1 \cap P_2 =\emptyset$, where $ P_1= \{ v \  |  \  v\subset [n], |v|=k \}$ and  $ P_2= \{ w \ |\  w \subset [n], |w|=k+1 \}$. If $f$ is an automorphism of $ \Gamma$ such that $f(v)=v$ for every $v\in P_1$, then $f$ is the identity automorphism of $ \Gamma$.

\end{lem}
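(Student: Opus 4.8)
The plan is to show that an automorphism $f$ fixing every vertex of $P_1$ must also fix every vertex of $P_2$, which immediately forces $f$ to be the identity. The key observation is that each vertex $w\in P_2$ is completely determined by its neighbourhood in $P_1$: indeed, if $w=x_1\cdots x_kx_{k+1}$ is a $(k+1)$-subset of $[n]$, then $N(w)\subseteq P_1$ consists of exactly the $k+1$ many $k$-subsets of $w$, and by Proposition 3.3 (bipartiteness) these are \emph{all} of the neighbours of $w$ — there are no edges inside $P_2$. The first step is therefore to record that a $(k+1)$-subset $w$ is recovered from $N(w)$ as the union $w=\bigcup_{u\in N(w)}u$ (any two distinct $k$-subsets of $w$ already cover $w$ when $k\ge 1$), so the map $w\mapsto N(w)$ from $P_2$ to the set of $(k+1)$-element subsets of $P_1$ realised as stars is injective.

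Next I would use the fact that $f$ is an automorphism to transport neighbourhoods: for any $w\in P_2$ we have $f(N(w))=N(f(w))$. Since $f(w)\in P_2$ (the two parts $P_1,P_2$ have different sizes as neighbourhoods — a vertex in $P_1$ has degree $n-k$ and a vertex in $P_2$ has degree $k+1$, and these are unequal because $n\neq 2k+1$, which holds as $k<n/2$ forces $n-k>k+1$ unless $n=2k+1$; alternatively $f$ fixes $P_1$ pointwise so it must fix $P_1$ setwise and hence $P_2$ setwise). But $f$ fixes every element of $P_1$, so $f(N(w))=N(w)$ since $N(w)\subseteq P_1$. Therefore $N(f(w))=N(w)$, and by the injectivity established in the first step, $f(w)=w$. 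This shows $f$ fixes $P_2$ pointwise as well.

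Combining the two steps, $f$ fixes every vertex of $V=P_1\cup P_2$, hence $f$ is the identity automorphism of $\Gamma$, as claimed. I do not expect a serious obstacle here; the only point requiring a little care is the justification that $f$ maps $P_2$ into $P_2$ (equivalently does not send a vertex of $P_2$ into $P_1$), which follows either from the degree count $n-k\neq k+1$ valid under the hypothesis $k<n/2$, or more simply from the observation that $f$ is a bijection fixing all of $P_1$ and hence must permute $P_2=V\setminus P_1$ among itself. The rest is the elementary remark that a set of size $k+1$ is the union of its $k$-subsets.
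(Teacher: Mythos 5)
Your proof is correct and is essentially the standard argument the paper delegates to [27, Lemma 3.1]: a vertex of $P_2$ is determined by its neighbourhood in $P_1$ (the union of its $k$-subsets recovers it), so an automorphism fixing $P_1$ pointwise must fix $P_2$ pointwise. One small caution: the degree comparison $n-k\neq k+1$ that you float in passing fails when $n=2k+1$, a case the hypothesis $k<\frac{n}{2}$ does \emph{not} exclude, but your alternative justification --- $f$ is a bijection fixing $P_1$ pointwise, hence permutes $P_2=V\setminus P_1$ --- is the right one and covers all cases.
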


\begin{proof}

The proof is straightforward (see [27] Lemma 3.1).

\end{proof}

\begin{rem} If in the assumptions of the  above lemma,  we replace  with  $f(v)=v$ for every $v\in P_2$, then we can show, by a similar discussion,   that $f$ is the identity automorphism of $ \Gamma$.
\end{rem}

\begin{lem} Let $\Gamma =(V, E)$ be a connected  bipartite graph with partition $V=V_1 \cup V_2$, $V_1 \cap V_2 = \emptyset $. Let $ f$ be an automorphism of $\Gamma $.  If  for a fixed vertex $v \in V_1 $, we have $ f(v) \in V_1$, then  $f(V_1) = V_1$ and $f(V_2) =V_2$.  Also, if  for a fixed vertex $v \in V_1 $,   we have $ f(v) \in V_2$,  then  $f(V_1) = V_2$ and $f(V_2) =V_1$.
\end{lem}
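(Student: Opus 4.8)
The statement is a general fact about connected bipartite graphs: an automorphism $f$ either preserves both parts of the bipartition or swaps them, and which of the two happens is detected by where $f$ sends a single vertex. Since $\Gamma$ is connected and bipartite, the bipartition $V = V_1 \cup V_2$ is uniquely determined by the graph: two vertices lie in the same part if and only if the (graph) distance between them is even. The plan is to exploit this characterization together with the fact that automorphisms preserve distances.

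First I would record the elementary observation that in a connected bipartite graph, for any two vertices $x,y$, the distance $d(x,y)$ is even precisely when $x$ and $y$ lie in the same part, and odd precisely when they lie in different parts. (This follows because every walk between two vertices in the same part has even length and every walk between vertices in different parts has odd length, so in particular every shortest path does.) Next, since $f \in \mathrm{Aut}(\Gamma)$ preserves adjacency, it preserves distances: $d(f(x),f(y)) = d(x,y)$ for all $x,y \in V$.

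Now suppose $f(v) \in V_1$ for the fixed vertex $v \in V_1$. For an arbitrary $w \in V_1$ we have $d(v,w)$ even, hence $d(f(v),f(w)) = d(v,w)$ is even; since $f(v) \in V_1$, evenness of this distance forces $f(w) \in V_1$. Thus $f(V_1) \subseteq V_1$. Similarly, for $w \in V_2$ the distance $d(v,w)$ is odd, so $d(f(v),f(w))$ is odd, and with $f(v)\in V_1$ this forces $f(w) \in V_2$, giving $f(V_2) \subseteq V_2$. Since $f$ is a bijection on the finite set $V$ (or, in general, an injection with $f(V_1)\subseteq V_1$, $f(V_2)\subseteq V_2$, $f(V)=V$), these inclusions are equalities: $f(V_1)=V_1$ and $f(V_2)=V_2$. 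For the second case, assume $f(v)\in V_2$; the same argument with the roles of $V_1$ and $V_2$ interchanged on the target side shows that $w\in V_1$ implies $f(w)\in V_2$ and $w \in V_2$ implies $f(w)\in V_1$, whence $f(V_1)=V_2$ and $f(V_2)=V_1$.

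This argument is essentially routine; there is no serious obstacle. The only point deserving a word of care is the passage from the inclusions $f(V_i)\subseteq V_j$ to equalities, where one should either invoke finiteness of $V$ (as is the case throughout this paper, since $V$ consists of subsets of $[n]$) or, in the general setting, observe that $f^{-1}$ is also an automorphism and apply the same distance argument to it to obtain the reverse inclusions. I would state the lemma and its proof for finite $\Gamma$, consistent with the rest of the paper, so that the bijectivity of $f$ closes the argument immediately.
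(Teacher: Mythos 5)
Your proof is correct and complete: the parity-of-distance characterization of the bipartition in a connected bipartite graph, combined with the fact that automorphisms preserve distances, gives exactly the dichotomy claimed, and your closing remark about passing from inclusions to equalities is handled properly. The paper itself offers no proof of this lemma --- it simply states that the proof is straightforward and defers to [27, Lemma 3.3] --- so there is no argument to compare against; your write-up is the standard one and would serve as a self-contained substitute for the omitted proof.
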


\begin{proof}
The proof is straightforward (see [27] Lemma 3.3).
\end{proof}
We now can conclude the following result.

\begin{cor}
Let $n$ and $k$ be integers with  $\frac{n}{2}> k\geq1$, and
let $\Gamma= (V,E)= B(n,k)$,  with $V=P_1 \cup P_2 $, $P_1 \cap P_2 =\emptyset$, where $ P_1= \{ v \  |  \  v\subset [n], |v|=k \}$ and  $ P_2= \{ w \ |\  w \subset [n], |w|=k+1 \}. $  Let $f$ be an automorphism of the graph $\Gamma$. If $n \neq 2k+1$,   then $ f(V_1)=V_1$ and $f(V_2) =V_2$.     Also, if $n = 2k+1$,  then $ f(V_1)=V_1$ and $f(V_2) =V_2$,  or $ f(V_1) = V_2 $ and $f(V_2) = V_1$.
\end{cor}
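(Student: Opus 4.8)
The plan is to handle the two regimes $n\neq 2k+1$ and $n=2k+1$ separately, each reducing to something already in place. Throughout, identify the $P_i$ of the hypothesis with the $V_i$ of the conclusion, so $V_1=P_1$, $V_2=P_2$; recall from the degree computation preceding Proposition~3.3 that $\deg v=n-k$ for $v\in V_1$ and $\deg w=k+1$ for $w\in V_2$, and from Proposition~3.3 that $\Gamma=B(n,k)$ is connected and bipartite with bipartition $V=V_1\cup V_2$.

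First I would dispose of the case $n\neq 2k+1$. Then $n-k\neq k+1$, so the two sides of the bipartition consist of vertices of different degrees. Since a graph automorphism preserves vertex degree, $f$ permutes the set of vertices of degree $n-k$ among themselves; but that set is exactly $V_1$. Hence $f(V_1)=V_1$, and since $f$ is a bijection of $V=V_1\cup V_2$ this forces $f(V_2)=V_2$ as well.

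Next I would treat the case $n=2k+1$. Here the degree argument is unavailable, because $n-k=k+1$ makes $\Gamma$ regular, so I would appeal instead to the structural dichotomy for connected bipartite graphs. Fix a vertex $v\in V_1$. Since $V=V_1\cup V_2$, either $f(v)\in V_1$ or $f(v)\in V_2$. In the former case Lemma~3.8, applied to the connected bipartite graph $\Gamma$ with this choice of $v$, gives $f(V_1)=V_1$ and $f(V_2)=V_2$; in the latter case the same lemma gives $f(V_1)=V_2$ and $f(V_2)=V_1$. This is precisely the asserted alternative.

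I do not anticipate a genuine obstacle; the only points needing a word of care are that $V=V_1\cup V_2$ really is the (connected, hence unique) bipartition, so that Lemma~3.8 applies, which is exactly the content of Proposition~3.3, and the harmless notational identification $V_i=P_i$. For context one might also note that when $n=2k+1$ the second alternative $f(V_1)=V_2$ does occur --- the complementation automorphism $\alpha$ built in the proof of Proposition~3.5 interchanges the two sides --- so the dichotomy is sharp; but this is a remark, not part of the argument.
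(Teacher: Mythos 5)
Your proof is correct and follows essentially the same route as the paper: the degree comparison $n-k\neq k+1$ settles the case $n\neq 2k+1$, and the dichotomy of Lemma~3.8 for connected bipartite graphs settles the case $n=2k+1$. The only cosmetic difference is that you conclude $f(V_1)=V_1$ directly from degree preservation and bijectivity, whereas the paper routes even that case through Lemma~3.8; both are fine.
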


\begin{proof}
We know that if $v \in P_1,$  then $deg(v)=n-k,  $  and if $w \in P_2,$  then $deg(w)=k+1$. If $n \neq 2k+1$,   then $ n-k \neq k+1.  $  Hence,  if $v \in P_1,$   then $f(v) \notin P_2$ (note that $deg(f(v))=deg(v)$). Therefore, if $v \in P_1,$ then  $f(v) \in P_1.$  Now, since by proposition 3.3. the graph $B(n,k)$ is connected and bipartite,  thus by by Lemma 3.8. we conclude that  $ f(V_1)=V_1$ and $f(V_2) =V_2$. The proof of the second assertion is similar.
\end{proof}

We now are ready to prove one of the important results of this paper.

\begin{thm}

Let $n$ and $k$ be integers with  $\frac{n}{2}> k\geq1$, and
let $\Gamma= (V,E)= B(n,k), $  with partition $V=P_1 \cup P_2 $, $P_1 \cap P_2 = \emptyset$, where $ P_1= \{ v \  | \  v\subset [n], |v|=k \}$ and  $ P_2= \{ w \  |  \ w \subset [n], |w|=k+1 \}$. If $n \neq 2k+1$, then $Aut(\Gamma) \cong$  $Sym([n]) $, and if $n=2k+1, $ then  $Aut(\Gamma) \cong$       $Sym([n]) \times \mathbb{Z}_2$,  where $\mathbb{Z}_2$ is the cyclic group of order $2$.

\end{thm}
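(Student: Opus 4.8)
The plan is to show that every automorphism $f$ of $\Gamma = B(n,k)$ is of the form $f_\sigma$ for some $\sigma \in \mathrm{Sym}([n])$ (up to composition with the complementation map $\alpha$ when $n = 2k+1$), so that $\mathrm{Aut}(\Gamma) = H$ or $\mathrm{Aut}(\Gamma) = H \times \langle\alpha\rangle$. Since $H \cong \mathrm{Sym}([n])$ and $\alpha$ has order $2$ and commutes with every $f_\sigma$ (because complementation commutes with the induced action of any permutation of $[n]$), this will give exactly the stated isomorphism types. First I would fix $f \in \mathrm{Aut}(\Gamma)$ and invoke Corollary 3.9: when $n \neq 2k+1$ we have $f(P_1) = P_1$, and when $n = 2k+1$ either $f(P_1) = P_1$ or $f(P_1) = P_2$; in the latter case replace $f$ by $\alpha f$, which by Corollary 3.9 again fixes $P_1$ setwise (here I use that $\alpha$ swaps $P_1$ and $P_2$). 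So in all cases it suffices to treat an automorphism $f$ with $f(P_1) = P_1$ and $f(P_2) = P_2$.

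The heart of the argument is to produce a permutation $\sigma$ of $[n]$ from the action of $f$ on $P_1$. The natural approach is to observe that the subgraph of $\Gamma$ induced on $P_1$ has no edges, so $f$ restricted to $P_1$ is just a bijection of the set of $k$-subsets of $[n]$; to extract a permutation of $[n]$ I would look at how $f$ acts on the combinatorial structure linking $P_1$ to $P_2$. Concretely, two vertices $v, w \in P_1$ have a common neighbour in $P_2$ precisely when $|v \cap w| = k-1$, i.e.\ when $v \cup w$ is a $(k+1)$-set; and more generally a collection of $k$-subsets has a common neighbour in $P_2$ iff their union has size $k+1$. This lets me recover, purely from adjacency, the $(k+1)$-subsets of $[n]$ as the maximal families of $k$-subsets with a common neighbour, and then recover the $1$-element and hence all subsets; equivalently, I can identify each element $i \in [n]$ with the family of $k$-subsets containing $i$. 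Since $f$ preserves all of this incidence data, it induces a well-defined permutation $\sigma$ of $[n]$, and by construction $f(v) = \{\sigma(x) : x \in v\} = f_\sigma(v)$ for every $v \in P_1$.

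Once $f$ and $f_\sigma$ agree on $P_1$, the automorphism $f_\sigma^{-1} f$ fixes every vertex of $P_1$, and $f_\sigma^{-1} f$ is still an automorphism of $\Gamma$ (and it fixes $P_2$ setwise by Corollary 3.9 or Lemma 3.8). By Lemma 3.6 this forces $f_\sigma^{-1} f$ to be the identity, so $f = f_\sigma \in H$. Tracing back through the reduction: if $n \neq 2k+1$ this shows $\mathrm{Aut}(\Gamma) = H \cong \mathrm{Sym}([n])$; if $n = 2k+1$ it shows every automorphism is either in $H$ or of the form $\alpha f_\sigma$, so $\mathrm{Aut}(\Gamma) = H \langle \alpha \rangle$, and since $\alpha \notin H$ (it swaps $P_1$ and $P_2$, which every element of $H$ preserves) and $\alpha$ is central, this is a direct product $H \times \mathbb{Z}_2 \cong \mathrm{Sym}([n]) \times \mathbb{Z}_2$.

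The main obstacle is the combinatorial reconstruction step — verifying cleanly that $f$'s action on $P_1$ genuinely comes from a permutation of $[n]$. The subtlety is that for small $k$ (notably $k = 1$) or for $n$ close to $2k$ one must check that "union has size $k+1$" really is detected by "having a common neighbour in $P_2$" and that distinct elements of $[n]$ give distinct $k$-subset families, so that $\sigma$ is well-defined and bijective; for $k=1$ the argument degenerates to the statement that $P_2$-vertices are the edges of the complete graph on $P_1$, which needs a separate (easy) check. I expect this case analysis to be routine given the hypothesis $k < n/2$, so I would handle it briefly and refer to the analogous treatment in [27] for the parts that are purely bookkeeping.
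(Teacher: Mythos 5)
Your scaffolding coincides with the paper's: reduce via Corollary 3.9 (composing with $\alpha$ when $n=2k+1$ and $f$ swaps the parts) to an automorphism $f$ preserving $P_1$, produce $\sigma\in \mathrm{Sym}([n])$ with $f=f_\sigma$ on $P_1$, and finish with Lemma 3.6; the direct-product bookkeeping for $n=2k+1$ is also as in the paper. The gap is in the central step, exactly where you flag "the main obstacle." You define $\sigma$ by identifying $i\in[n]$ with the family $F_i=\{v\in P_1 : i\in v\}$ and then assert that "since $f$ preserves all of this incidence data, it induces a well-defined permutation $\sigma$ of $[n]$." But $f$ is only known to preserve adjacency in $B(n,k)$; nothing you have written shows that $f$ carries each family $F_i$ onto some $F_j$, and that claim is not a routine bookkeeping matter -- it is precisely the nontrivial content of the statement $\mathrm{Aut}(J(n,k))\cong \mathrm{Sym}([n])$. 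The part of the reconstruction you do justify (maximal families of $k$-sets with a common neighbour are the neighbourhoods $N(w)$, $w\in P_2$) is automatic from $f(P_2)=P_2$ and gives you nothing new; the passage from $(k+1)$-sets down to singletons requires showing that the induced map on "deleted elements" $u\setminus v$ is consistent across different pairs $v\subset u$, and this needs a genuine argument (in the classical proof, an analysis of the two types of maximal cliques of the Johnson graph, distinguished by their sizes $n-k+1\neq k+1$ when $n\neq 2k$).

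The paper closes this gap differently: it first proves that $g=f|_{P_1}$ is an automorphism of the Johnson graph $J(n,k)$, by counting common neighbours in $B(n,k)$ (two distinct $k$-sets have exactly one common neighbour, namely their union, iff they meet in $k-1$ points, and none otherwise, so $g$ preserves Johnson adjacency), and then invokes the known theorem that $\mathrm{Aut}(J(n,k))\cong \mathrm{Sym}([n])$ for $n\neq 2k$ -- applicable here since $k<n/2$ -- to obtain $\theta$ with $g=f_\theta$. If you want to avoid citing that theorem you must supply the clique/consistency argument yourself; as written, your proposal assumes its conclusion at the key moment.
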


\begin{proof}
The proof is almost similar to what is seen in [27, Theorem 3.6]. \newline
(a)\ \ In the first step, we prove the theorem for the case   $ n\neq 2k+1 $.  Let  $ H =\{ f_\theta \  |\  \theta \in$   $Sym ([n]) \} $.  We have already   seen  that $H  \cong$   $Sym([n]) $ and $H \leq Aut(B(n,k)) $.
Let $G=Aut(B(n,k)).$  We   show that  $G=H.$ Let  $f \in G. $ Then by Corollary 3.9. we have
 $f(V_1)=V_1$. Then, for every vertex $v \in V_1$ we have $f(v) \in V_1$, and therefore the mapping $ g=f_{|V_1}: V_1 \rightarrow  V_1$, is a permutation of $V_1$ where $ f_{|V_1}  $ is the restriction of $f$ to $V_1$. Let $ \Gamma_2= J(n,k)$ be the Johnson graph with the vertex set $V_1$. Then, the vertices $ v,w \in V_1$ are adjacent in $\Gamma_2$  if and only if $| v \cap w| =k-1$.\

We assert that the permutation  $ g= f_{|V_1}  $ is an automorphism of the graph $\Gamma_2$. \newline
For proving our assertion, it is sufficient to show that if $v,w \in V_1$ are such that $| v \cap w| =k-1, $ then we must have  $ |g(v) \cap g(w)|= k-1$. Note that since $v,w$ are $k$-subsets of $[n]$, then if $u$ is a common neighbor
of $v,w$ in the   graph $ \Gamma=B(n,k)$, then the set $u$ contains the sets $v$ and $w$. In particular $u$ contains the $(k+1)$-subset $ v\cup w$. now, since $u$ is a   $(k+1)$-subset  of $[n]$, then we have $u= v\cup w$. In other words, vertices  $v$ and $w$ have exactly one common neighbor, namely,   the vertex $u= v\cup w, $ in the   graph $ \Gamma=B(n,k)$. \\
We assert that $|g(v) \cap g(w)|= k-1$.  In fact, if $|g(v) \cap g(w)|= k-h < k-1 $, then $ h>1$ and hence $ |g(v) \cup g(w)| = k+h \geq k+ 2 $. Hence, there is no $(k+1)$-subset  $t$ in $[n]$, such that
$  g(v) \cup g(w) \subset t.  $  In other words, the vertices $g(v)$ and $g(w)$ have no common neighbors in the graph $B(n,k)$, which is impossible. Note that $f$ is an automorphism of the graph $\Gamma=B(n,k)$, hence the number of the common neighbors of $v$ and $w$ in $\Gamma$ is equal to the number of the common neighbors of $f(v)=g(v)$ and $f(w)=g(w)$ in the graph $\Gamma$.

Our argument shows that  the permutation $g=f_{|V_1}$ is an automorphism of the Johnson graph $ \Gamma_2=J(n,k) $ and therefore by [5 chapter 9, 28] there is a permutation $ \theta \in$   $Sym([n])$  such that $g= f_{\theta}$. \

On the other hand, we know that $ f_{\theta} $  by its natural action  on the vertex set of the  graph $ \Gamma= B(n,k) $  is an automorphism of $ \Gamma$.   Therefore, $l=f_{\theta }^{-1}f  $ is an automorphism of the graph $\Gamma=B(n,k)$
such that $l$ is the identity automorphism on the subset $P_1$. We now can   conclude, by Lemma 3.6.  that $l= f_{\theta }^{-1}f$,  is the identity automorphism of $ \Gamma $, and therefore $f=f_{\theta }$. \

In other words, we have proved that if $f$ is an automorphism of $\Gamma = B(n,k), $ then $f=f_{\theta}$, for some $ \theta \in$ S$ym([n] )$, and hence $f \in H$ ($n \neq 2k+1$).   We now deduce that  $G=H.$\\
(b)\ \ In this step, we  prove the theorem for the case $n=2k+1. $ Firstly,  note that   the mapping $\alpha : V(\Gamma)\rightarrow V(\Gamma) $,  defined  by the rule, $\alpha(v) = v^c$,  where
$v^c$ is  the complement of the subset $v$ in  $[n]= [2k+1]=\{ 1,2, \dots,2k+1\}$, is an automorphism of the   graph $\Gamma=B(n,k)$. In fact, if $A,B$ are subsets of $[n]$ such that   $ A\subset B$, then $ B^c\subset A^c$, and hence if \{A,B\} is an edge of the graph $ B(n,k) $, then $\{\alpha(A), \alpha(B)\}$ is an edge of the graph $ B(n,k) $.  Therefore we have,
$ < \alpha> \leq Aut(B(n,k)) $. Note that the order of $\alpha$ in the group $G=Aut(B(n,k)) $ is 2, and hence $ <\alpha> \cong \mathbb{Z}_2$.
 Let  $ H =\{ f_\theta \  |\  \theta \in$ S$ym ([n]) \} $.  We have seen already that $H  \cong$      $Sym([n]) $ and $H \leq Aut(B(n,k)) $.
 We can see that $ \alpha \not\in H $,   and for every $\theta \in$    $Sym([n])$, we have,   $f_{\theta}\alpha= \alpha f_{\theta}$. Therefore,   \\

\centerline{  $Sym([n]) \times \mathbb{Z}_2  \cong H \times <\alpha>\cong <H, \alpha>$ }
\

\centerline{ =$\{  f_{\gamma} \alpha^i\ |   \   \gamma \in$   $Sym([n]), 0\leq i \leq 1 \}=S, $}

\   \newline is a subgroup of $G=Aut(\Gamma)$. We    show that $G=S$. \

     Let $f \in Aut(\Gamma)=G$. We show that $ f \in S$. There are two cases. \

\ \ (i)  There is a vertex $v \in V_1$   such that $f(v) \in V_1$, and hence by Lemma 3.8. we have    $f(V_1)=V_1$.   \

 (ii)  There is a vertex $v \in V_1$   such that $f(v) \in V_2$, and hence by Lemma 3.8. we have    $f(V_1)=V_2$. \\
Let  $f(V_1)=V_1$. Then, by a similar argument which we did in (a), we can conclude that $f=f_{\theta}, $ where $\theta \in$   $Sym([n]).$ In other words, $f \in S.$ \\
 We now assume that $ f(V_1) \neq V_1 $. Then,  $f(V_1) = V_2$. Since the mapping $ \alpha $  is an automorphism of the graph $ \Gamma $, then $f \alpha$ is an automorphism of $\Gamma $ such that $f \alpha(V_1) = f(\alpha(V_1))= f(V_2)=V_1$. Therefore, by what is proved in (i),   we have $ f \alpha=f_{\theta} $, for some $ \theta \in$   $Sym([n]) $. Now since $ \alpha    $  is of order $2$, then $f= f_{\theta} \alpha \in S=\{  f_{\gamma} \alpha^i \ | \  \gamma \in$   $Sym([n]), 0\leq i \leq 1 \}$.

\end{proof}

Let $ \Gamma $ be a graph.  The line graph $ L( \Gamma )$
of the graph $ \Gamma$ is constructed by taking the edges of $ \Gamma$
as vertices of  $ L( \Gamma )$, and joining two vertices in $ L( \Gamma )$
whenever the corresponding edges in $ \Gamma $ have a common vertex.   It is an easy task to show that if $ \theta \in  Aut( \Gamma)$, then the mapping  $ f(\theta): V(L( \Gamma)) \rightarrow V(L( \Gamma))$ defined by the rule,
 $$f(\theta)(\{ u,v\})= \{  \theta(u), \theta(v)\}, \ \{u,v \} \in E( \Gamma ),$$
 is an automorphism of the graph  $ L( \Gamma )$. Hence, it is clear that if a graph $\Gamma$ is
 edge-transitive, then its line graph is vertex transitive. There is
an important relation between $ Aut( \Gamma)$ and  $ Aut( L(\Gamma))$. In fact, we have
the following result [2, chapter 15].
\begin{thm}
 Let $\Gamma$ be a connected graph. The mapping  $ \theta: Aut( \Gamma) \rightarrow Aut( L(\Gamma))$ defined by the rule,
$$ \theta(g) \{ u,v\} =  \{  g(u), g(v)\}, \  g \in Aut( \Gamma),  \  \{u,v \} \in E( \Gamma ), $$
is a group homomorphism and in fact we have; \\
(i) $ \theta $ is a monomorphism provided  $ \Gamma \neq K_2 $;  \\
(ii) $ \theta $ is an
epimorphism provided $\Gamma $ is not $K_4 $, $K_4 $ with one edge deleted, or $K_4 $ with two adjacent
edges deleted.
\end{thm}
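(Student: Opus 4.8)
My plan is to handle the three assertions in turn; only the last is substantial. For the homomorphism property I would simply record the routine facts already prepared just before the statement: for $g\in Aut(\Gamma)$ the map $\theta(g)$ is a bijection of $V(L(\Gamma))=E(\Gamma)$, and since $g$ is injective on $V(\Gamma)$, two edges $e,f$ of $\Gamma$ share a vertex iff $g(e)$ and $g(f)$ do; hence $\theta(g)\in Aut(L(\Gamma))$. That $\theta(gh)=\theta(g)\theta(h)$ and $\theta(\mathrm{id})=\mathrm{id}$ is immediate, so $\theta$ is a group homomorphism, and it remains only to identify its kernel and its image.

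For (i), suppose $g\in\ker\theta$, i.e. $\{g(u),g(v)\}=\{u,v\}$ for every edge $\{u,v\}$ of $\Gamma$. If a vertex $v$ has two distinct neighbours $u_1,u_2$, then $g(v)\in\{v,u_1\}\cap\{v,u_2\}=\{v\}$, so $g$ fixes every vertex of degree $\ge 2$. Moreover the set of $g$-fixed vertices is closed under passing to neighbours, because $g(w)=w$ together with $\{g(w),g(w')\}=\{w,w'\}$ forces $g(w')=w'$. Since a connected graph on $n\ge 3$ vertices has a vertex of degree $\ge 2$, connectivity then gives $g=\mathrm{id}$ whenever $|V(\Gamma)|\ge 3$; and $|V(\Gamma)|\le 2$ reduces to $\Gamma=K_1$ (trivial) or $\Gamma=K_2$, which is excluded and is genuinely bad, since $Aut(K_2)\cong\mathbb Z_2$ while $L(K_2)=K_1$. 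Hence $\theta$ is a monomorphism for $\Gamma\neq K_2$.

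Part (ii) is the Whitney--Sabidussi reconstruction theorem and is the heart of the matter. The plan is to recover $V(\Gamma)$ from $L(\Gamma)$ by means of \emph{vertex-cliques}: for $v\in V(\Gamma)$ put $C_v=\{e\in E(\Gamma)\mid v\in e\}$, a clique of $L(\Gamma)$; each vertex $\{u,v\}$ of $L(\Gamma)$ lies in exactly the two cliques $C_u,C_v$, and for $u\neq w$ one has $uw\in E(\Gamma)$ iff $C_u\cap C_w\neq\emptyset$, in which case $C_u\cap C_w=\{uw\}$. Thus $\Gamma$ is the intersection graph of $\mathcal C=\{C_v\mid v\in V(\Gamma)\}$, and the whole problem reduces to showing that every $\phi\in Aut(L(\Gamma))$ carries $\mathcal C$ onto itself. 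For this I would use the elementary fact that every clique of $L(\Gamma)$ of size $\ge 3$ is contained in some vertex-clique $C_v$ or is the triangle-clique of a triangle of $\Gamma$, together with Whitney's lemma, which provides an internal (adjacency-only) criterion telling, for the relevant cliques, which of the two types occurs and forcing $\phi$ to permute them. The delicate point is that a triangle of $L(\Gamma)$ can be read either as three edges of $\Gamma$ through a common vertex of degree $3$ or as a triangle of $\Gamma$; the case analysis shows that this ambiguity can only persist globally when $\Gamma$ is $K_4$, $K_4$ minus an edge, or $K_4$ minus two adjacent edges, and these are precisely the excluded graphs (for each of them one checks by hand that $|Aut(L(\Gamma))|>|Aut(\Gamma)|$). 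Granting that $\phi$ permutes $\mathcal C$, define $g$ on $V(\Gamma)$ by $\phi(C_v)=C_{g(v)}$; since $\phi$ preserves non-empty intersections within $\mathcal C$, $g$ is an automorphism of $\Gamma$, and for every edge $\{u,v\}$ one computes $\theta(g)(\{u,v\})=C_{g(u)}\cap C_{g(v)}=\phi(C_u)\cap\phi(C_v)=\phi(\{u,v\})$, so $\theta(g)=\phi$ and $\theta$ is onto.

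The step I expect to be the real obstacle is exactly this clique/triangle classification inside (ii): putting Whitney's lemma into a clean adjacency-only form and running the finite case analysis over the degenerate degree-$3$ configurations to show the only obstructions are the three listed graphs; the remaining small cases (e.g. $\Gamma=K_3$, where surjectivity follows from injectivity by finiteness) are bookkeeping. In practice, as the paper does, one simply quotes Whitney's theorem from [2, Chapter 15].
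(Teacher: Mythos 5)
The paper offers no proof of this statement at all: it is stated as a known result and attributed to [2, Chapter 15] (the Whitney--Sabidussi theorem), so there is no in-text argument to measure your proposal against. On its own terms your write-up is sound and, for the parts you actually carry out, more detailed than the source. The verification that $\theta$ is a homomorphism and the kernel computation for (i) are complete and correct: a vertex with two distinct neighbours is forced to be fixed, fixedness propagates along edges, and connectivity finishes the argument, with $K_2$ correctly isolated as the unique failure (and $K_1$ as the trivial degenerate case). For (ii) you give the standard reduction to the family of vertex-cliques $C_v$, the observation that $\Gamma$ is the intersection graph of that family, and the correct identification of the crux, namely Whitney's classification of cliques in $L(\Gamma)$ and the finite case analysis showing that the star-versus-triangle ambiguity is globally fatal only for $K_4$, $K_4$ minus an edge, and $K_4$ minus two adjacent edges; but you do not execute that classification, you defer it to the literature --- which is exactly what the paper itself does. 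So there is no gap relative to the paper; the only caveat is that, read as a self-contained proof, part (ii) remains an outline whose hardest step is still a citation.
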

Let $ [n] = \{1, ..., n \}, \   n \geq 4 $ and $ \Gamma =B(n,k) $. We let  the graph $ \Gamma_1 = L ( \Gamma ) $,  the line graph of the graph $\Gamma$.      Then,  each vertex in $ \Gamma_1 $ is of  the form $ \{ A, Ay \} $, where $ A \subseteq [n], \  |A | = k $,  $ y \in [n] - A $ and
$ Ay = A \cup \{ y \} $. Two vertices $ \{A, Ay \} $ and $ \{B, Bz \} $ are adjacent in $ \Gamma_1 $ if and only if $ A = B $ and $ y \neq z $, or $ Ay = Bz $ and $ A, B $ are distinct $ k $-subset of $ Ay $.  In other words,  if $ A = x_1 ... x_k \subseteq [n]  $,  then for  the vertex $ v = \{A, Ay_1 \} $ of $  \Gamma_1 = L ( B(n,k)),  $ we have;
$$ N ( v ) = \{ \{A, Ay_2 \}, ..., \{A, Ay_{n-k} \}, \{ Ay_1 - \{x_1\}, Ay_1 \}, ..., \{Ay_1 - \{  x_k \}, Ay_1 \} \}. $$
Hence, the graph $  L ( B(n,k)) $  is a regular graph of valency $ n-k-1+k= n-1$. In other words, the degree of each vertex in the graph    $  L ( B(n,k)) $ is independent of $k$. In fact, the graph $ L ( B(n,k)) $ is  a vertex-transitive graph, because by Proposition 3.5. the graph $  B(n,k) $\ is an edge-transitive graph.
Figure 3.  shows the graph $L(B(4,1))$ in the   plane.\\\\
\definecolor{qqqqff}{rgb}{0.,0.,1.}
\begin{tikzpicture}[line cap=round,line join=round,>=triangle 45,x=1.3cm,y=.9cm]
\clip(-1.4,-1.) rectangle (7.12,5.5);
\draw (-0.02,0.08)-- (3.08,2.);
\draw (6.56,-0.02)-- (3.08,2.);
\draw (-0.02,0.08)-- (6.56,-0.02);
\draw (-0.12,5.2)-- (2.98,3.3);
\draw (2.98,3.3)-- (6.34,5.18);
\draw (6.34,5.18)-- (-0.12,5.2);
\draw (0.98,3.08)-- (2.12,2.6);
\draw (2.12,2.6)-- (1.,1.8);
\draw (0.98,3.08)-- (1.,1.8);
\draw (4.28,2.62)-- (5.52,3.12);
\draw (4.28,2.62)-- (5.52,2.02);
\draw (5.52,2.02)-- (5.52,3.12);
\draw (-0.6,4.9) node[anchor=north west] {1,12};
\draw (6.48,4.92) node[anchor=north west] {1,13};
\draw (2.72,4.08) node[anchor=north west] {1,14};
\draw (1.04,3.7) node[anchor=north west] {2,12};
\draw (0.02,2.) node[anchor=north west] {2,23};
\draw (2.12,2.4) node[anchor=north west] {2,24};
\draw (3.76,2.34) node[anchor=north west] {4,24};
\draw (5.72,3.26) node[anchor=north west] {4,41};
\draw (5.78,2.02) node[anchor=north west] {4,43};
\draw (6.56,-0.16) node[anchor=north west] {3,34};
\draw (-0.06,-0.22) node[anchor=north west] {3,23};
\draw (2.9,1.62) node[anchor=north west] {3,31};
\draw (2.12,2.6)-- (4.28,2.62);
\draw (5.52,3.12)-- (2.98,3.3);
\draw (-0.12,5.2)-- (0.98,3.08);
\draw (5.52,2.02)-- (6.56,-0.02);
\draw (1.,1.8)-- (-0.02,0.08);
\draw (6.34,5.18)-- (3.08,2.);
\draw (1.98,-0.5) node[anchor=north west] {Figure 3.  \ \  L(B(4,1))};
\begin{scriptsize}
\draw [fill=qqqqff] (2.98,3.3) circle (1.5pt);
\draw [fill=qqqqff] (-0.12,5.2) circle (1.5pt);
\draw [fill=qqqqff] (6.34,5.18) circle (1.5pt);
\draw [fill=qqqqff] (3.08,2.) circle (1.5pt);
\draw [fill=qqqqff] (-0.02,0.08) circle (1.5pt);
\draw [fill=qqqqff] (6.56,-0.02) circle (1.5pt);
\draw [fill=qqqqff] (2.12,2.6) circle (1.5pt);
\draw [fill=qqqqff] (0.98,3.08) circle (1.5pt);
\draw [fill=qqqqff] (1.,1.8) circle (1.5pt);
\draw [fill=qqqqff] (4.28,2.62) circle (1.5pt);
\draw [fill=qqqqff] (5.52,3.12) circle (1.5pt);
\draw [fill=qqqqff] (5.52,2.02) circle (1.5pt);
\end{scriptsize}
\end{tikzpicture} \\
 Note that in the above figure $i,ij = \{  \{i \},  \{i,j  \}\}$. \

We know by Theorem 3.3.  the graph $B(n,k)$ is a  connected graph, hence its line graph, namely,  the graph $ L ( B(n,k))$  is a connected graph [4].   The graph $ L ( B(n,k))  $ has some interesting properties, for example, if $ n,k $ are odd integers, then  $ L ( B(n,k))  $ is a hamiltonian graph. In fact,    if $v$ is a vertex in $B(n,k)$, then
$deg(v) \in \{ k+1,n-k\}$,   therefore,  if $ n,k $ are odd integers, then the degree of each vertex in the graph $B(n,k)$ is an even integer, and hence $B(n,k)$ is   eulerian  [4].   Consequently, in this case  the graph   $ L ( B(n,k))$ is a hamiltonian graph.\

We know that the graph $ L ( B(n,k))$ is a vertex-transitive graph. There is a well known conjecture in graph theory which  asserts that  almost all vertex-transitive connected graphs are hamiltonian [21]. Depending on this conjecture and what is mentioned in above,
it  seems that the following conjecture has an affirmative answer.\
\

\

{\bf Conjecture} The line  graph of the graph  $B(n,k)$, namely,  $ L ( B(n,k))$ is a hamiltonian graph. \

\

A graph $\Gamma$ is called an integral graph, if all of its eigenvalues are  integers.  The notion of integral graphs was first introduced by F. Harary and A.J. Schwenk
in 1974  [12].  In general, the problem of characterizing
integral graphs seems to be very difficult. There are good surveys in this area (for example [6]).
In the scope of the  present paper,  we have the following result. \\

{\bf Fact} ( Mirafzal [26, Theorem 3.4] ) Let $n>3$ be an integer. Then,  the graph $L(B(n,1))$ is a vertex-transitive  integral graph with distinct eigenvalues,
 $ -2, -1, 0, n-2, n-1$. \\

On the automorphism group of the line graph of the graph $B(n,k), $   namely, L(B(n,k)),   by Proposition 3.3. and   Theorem 3.10. and Theorem 3.11.  we have    the following result.

\begin{thm}

Let $ n\geq 4, \
 [n] = \{1, ..., n \} $, $ 1 \leq  k < \frac {n} {2} $.  If $ \Gamma = B(n,k)$ and $ n\neq 2k+1$,
then  $ Aut (L( \Gamma )) \cong$ S$ym ([n]) $. If $ n=2k+1$, then $ Aut (L( \Gamma )) \cong$ S$ym ([n]) \times \mathbb{Z}_2$.
\end{thm}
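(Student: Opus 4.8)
The plan is to deduce the result from the classical Whitney-type correspondence between $\mathrm{Aut}(\Gamma)$ and $\mathrm{Aut}(L(\Gamma))$ recorded in Theorem 3.11, combined with the determination of $\mathrm{Aut}(B(n,k))$ obtained in Theorem 3.10. First I would recall from Proposition 3.3 that $\Gamma = B(n,k)$ is connected, so that Theorem 3.11 applies and the map $\theta \colon \mathrm{Aut}(\Gamma) \to \mathrm{Aut}(L(\Gamma))$, $\theta(g)\{u,v\} = \{g(u),g(v)\}$, is a group homomorphism.

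The next step is to check that $\theta$ is in fact a group isomorphism by excluding the exceptional graphs listed in Theorem 3.11. Since $n \geq 4$ and $k \geq 1$, the graph $\Gamma$ has $\binom{n}{k} + \binom{n}{k+1} \geq \binom{4}{1} + \binom{4}{2} = 10$ vertices; in particular $\Gamma \neq K_2$, so by Theorem 3.11(i) the map $\theta$ is a monomorphism. Moreover, by Proposition 3.3 the graph $\Gamma$ is bipartite, hence contains no triangle, whereas each of $K_4$, $K_4$ with one edge deleted, and $K_4$ with two adjacent edges deleted has exactly four vertices and contains a triangle. Consequently $\Gamma$ is none of these three graphs, and Theorem 3.11(ii) shows that $\theta$ is an epimorphism. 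Therefore $\theta$ is an isomorphism and $\mathrm{Aut}(L(\Gamma)) \cong \mathrm{Aut}(\Gamma)$.

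Finally I would invoke Theorem 3.10: if $n \neq 2k+1$ then $\mathrm{Aut}(\Gamma) \cong \mathrm{Sym}([n])$, whence $\mathrm{Aut}(L(\Gamma)) \cong \mathrm{Sym}([n])$; and if $n = 2k+1$ then $\mathrm{Aut}(\Gamma) \cong \mathrm{Sym}([n]) \times \mathbb{Z}_2$, whence $\mathrm{Aut}(L(\Gamma)) \cong \mathrm{Sym}([n]) \times \mathbb{Z}_2$. This yields exactly the claimed statement.

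As for where the difficulty lies: there is essentially no hard step in this particular proof, since all of the substantive work has already been done in Theorem 3.10 (the determination of $\mathrm{Aut}(B(n,k))$) and in the classical Theorem 3.11. The only point requiring any care is the verification that $B(n,k)$ avoids the small exceptional graphs of Theorem 3.11, and this is immediate once one observes that $B(n,k)$ is bipartite with at least ten vertices. Thus the "main obstacle" here is merely the bookkeeping needed to confirm the hypotheses of the two cited results; no genuinely new argument is required.
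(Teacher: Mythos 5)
Your proposal is correct and follows exactly the route the paper takes: the paper derives Theorem 3.12 by citing Proposition 3.3 (connectedness), Theorem 3.10 (the computation of $Aut(B(n,k))$), and Theorem 3.11 (the Whitney correspondence), without even writing out the verification of the exceptional cases. Your explicit check that $B(n,k)$ has at least ten vertices and is bipartite, hence is none of $K_2$, $K_4$, or the two $K_4$-minus-edges graphs, just fills in the bookkeeping the paper leaves implicit.
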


We now  ready  to determine the values of $n,k$ such that  the graph $L(B(n,k))$ is a non-Cayley graph.

A permutation group $G$, acting on a set $V, $  is $k$-$homogeneous$  if its induced action on
$V^{ \{k\}}$ is transitive, where $V^{ \{k\}}$ is the set of all $k$-subsets of $V$. Also  we say that $G$ is $k$-$transitive$  if $G$ is transitive on $V^{ (k)}$, where $V^{ (k)}$ is  the set of $k$-tuples of distinct elements  of $V$. Note that if $G$ is $k$-homogeneous,  then we have $ {n} \choose {k}$ $ | |G|$,  and if $G$ is $k$-transitive,  then we have $ \frac{n!}{(n-k)!} |  |G|$. If the group $G$ acts regularly on $ V^{(k)} $, then $G$
is said to be $sharply$  $k$-transitive on $V$. This means that for given two $k$-tuples in $V^{(k)}$, there is a unique permutation in $G$ mapping one $k$-tuple to the other.\\
 We need  the following two results which can be found in  [8].

 \begin{thm} $[20]$ Let $G$ be a  $k$-homogeneous group
 on a finite set $\Omega$ of $n$ points, where $n \geq 2k$. Then,  if $k\geq2,$ \\
(a)  the  permutation group $G$ is $(k-1)$-transitive, and \\
(b) if also $k \geq 5,$  the given permutation group $G$ is $k$-transitive.

\end{thm}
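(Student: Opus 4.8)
The plan is to deduce the theorem from two ingredients: a ``downward'' statement obtained from an elementary module argument, and the genuinely combinatorial upgrade from homogeneity to transitivity. Throughout, for $0\le j\le n$ let $\Omega^{\{j\}}$ denote the set of $j$-subsets of $\Omega$ and $\Omega^{(j)}$ the set of ordered $j$-tuples of distinct points; recall that $G$ is $j$-homogeneous (resp. $j$-transitive) precisely when $G$ has a single orbit on $\Omega^{\{j\}}$ (resp. on $\Omega^{(j)}$), equivalently when the trivial module occurs with multiplicity $1$ in the rational permutation module $\mathbb{Q}[\Omega^{\{j\}}]$ (resp. $\mathbb{Q}[\Omega^{(j)}]$).

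The first ingredient is: \emph{a $k$-homogeneous group with $n\ge 2k$ is $j$-homogeneous for every $0\le j\le k$.} To see this one uses the classical fact that, for $i\le j$ with $i+j\le n$, the inclusion map
\[ \iota_{i,j}\colon\mathbb{Q}[\Omega^{\{i\}}]\to\mathbb{Q}[\Omega^{\{j\}}],\qquad \iota_{i,j}(S)=\textstyle\sum_{S\subseteq T,\,|T|=j}T, \]
is injective (this is a short linear-algebra computation: $\iota_{i,j}$ is a product of raising operators $\mathbb{Q}[\Omega^{\{t\}}]\to\mathbb{Q}[\Omega^{\{t+1\}}]$, each of which has nonsingular Gram matrix when $2t<n$). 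Since $\iota_{i,j}$ commutes with $\mathrm{Sym}(\Omega)$, hence with every $G\le\mathrm{Sym}(\Omega)$, it realizes $\mathbb{Q}[\Omega^{\{i\}}]$ as a $G$-submodule of $\mathbb{Q}[\Omega^{\{j\}}]$; therefore the number of $G$-orbits on $j$-subsets is a nondecreasing function of $j$ for $j\le n/2$. As $n\ge 2k$ gives $k\le n/2$, the hypothesis forces $1\le\#(G\text{-orbits on }\Omega^{\{j\}})\le\#(G\text{-orbits on }\Omega^{\{k\}})=1$ for all $0\le j\le k$. In particular $G$ is transitive on $\Omega$.

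For part (a) I would now induct on $k\ge2$. The base case $k=2$ is done: $1$-homogeneity is just transitivity. For the inductive step, fix $\alpha\in\Omega$; since $G$ is transitive and ``$G$ is $(k-1)$-transitive iff $G$ is transitive and $G_\alpha$ is $(k-2)$-transitive on $\Omega\setminus\{\alpha\}$'', it suffices to show $G_\alpha$ is $(k-1)$-homogeneous on the $n-1$ points of $\Omega\setminus\{\alpha\}$ and then apply the inductive hypothesis with $k-1$ in place of $k$ (note $n-1\ge 2(k-1)$). Establishing this point-stabilizer claim is the heart of the matter. The mechanism is a double count of flags: counting $G$-orbits on pairs $(\gamma,A)$ with $\gamma\in A$, $|A|=k$, two ways (via $\gamma$, using transitivity of $G$ on points, and via $A$, using transitivity of $G$ on $k$-subsets) shows that the number of $G_\alpha$-orbits on $(k-1)$-subsets of $\Omega\setminus\{\alpha\}$ equals the number of orbits of the setwise stabilizer $G_{\{A\}}$ on $A$; the analogous count with $\gamma\notin A$ identifies the number of $G_\alpha$-orbits on $k$-subsets of $\Omega\setminus\{\alpha\}$ with the number of $G_{\{A\}}$-orbits on $\Omega\setminus A$; and the first ingredient, applied to $G_\alpha$ acting on $\Omega\setminus\{\alpha\}$ (legitimate because $n-1\ge 2k-1$), forces the former count to be at most the latter. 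Combining these relations with the homogeneity facts already in hand — and, in the inductive bookkeeping, with the $2$-transitivity that lower levels have already yielded — pins the number of $G_\alpha$-orbits on $(k-1)$-subsets down to exactly $1$. This last reconciliation (turning the module inequality into an exact equality) is the main obstacle, and is precisely the content of the Livingstone--Wagner argument.

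For part (b), suppose $k\ge5$; by (a) the group $G$ is already $(k-1)$-transitive. Fix a $(k-1)$-subset $\Delta$ and let $H$ be its pointwise stabilizer. Counting $G$-orbits on flags $(B,\Delta)$ with $\Delta\subseteq B$, $|B|=k$, two ways (using transitivity of $G$ on $k$-subsets and on $(k-1)$-subsets) shows that the setwise stabilizer $G_{\{\Delta\}}$ is transitive on $\Omega\setminus\Delta$; since $G$ is $(k-1)$-transitive, $H\trianglelefteq G_{\{\Delta\}}$ with $G_{\{\Delta\}}/H\cong\mathrm{Sym}(\Delta)\cong S_{k-1}$. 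Hence the $H$-orbits on $\Omega\setminus\Delta$ form a block system of equal-sized blocks permuted transitively by $S_{k-1}$, so their number $d$ divides both $|\Omega\setminus\Delta|=n-k+1$ and $(k-1)!$, and $G$ is $k$-transitive exactly when $d=1$. For $k-1\ge5$ the only transitive actions of $S_{k-1}$ of degree less than $k-1$ have degree $1$ or $2$; ruling out $d=2$ by a final short argument then gives $d=1$. I expect this exclusion of a small ``deficiency'' $d>1$ to be the obstacle for (b): it genuinely fails for $k=3$ and $k=4$ (where groups such as $\mathrm{P\Gamma L}(2,2^{d})$ and their relevant subgroups are $k$-homogeneous but not $k$-transitive), which is exactly why the hypothesis $k\ge5$ is needed.
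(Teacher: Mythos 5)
The paper does not prove this statement at all: it is quoted from Livingstone--Wagner \cite{} (reference [20], as presented in [8]) as a known deep result, so there is no in-paper argument to measure yours against. Judged on its own, your proposal contains a genuine gap at the decisive step. Your ``first ingredient'' --- injectivity of the raising maps $\iota_{i,j}$ for $j\le n/2$, hence monotonicity of orbit counts and $j$-homogeneity for all $j\le k$ --- is correct and complete, but it only delivers the homogeneity half of Livingstone--Wagner, not the transitivity assertions that the theorem actually makes. For part (a), your reduction via the two flag counts is sound: everything comes down to showing that the setwise stabilizer $G_{\{A\}}$ of a $k$-subset $A$ acts transitively on $A$ (equivalently, that $G_\alpha$ is $(k-1)$-homogeneous on $\Omega\setminus\{\alpha\}$). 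At exactly that point you write that the ``reconciliation\dots is the main obstacle, and is precisely the content of the Livingstone--Wagner argument'' --- i.e.\ the one nontrivial idea is named but not supplied. The module inequality you do derive bounds the orbit count you care about \emph{above} by another quantity not known to equal $1$, so nothing in the sketch forces the required equality; and the analogous equality at level $k=2$ genuinely fails for the groups $A\Gamma L(1,q)$ with $q\equiv 3\ (\mathrm{mod}\ 4)$, which shows that homogeneity does not upgrade to transitivity at the same level without a real argument.

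Part (b) inherits the same difficulty and adds an unjustified bound. The transitivity of $G_{\{\Delta\}}$ on $\Omega\setminus\Delta$, which you need so that $S_{k-1}$ permutes the $d$ blocks transitively, is (by your own flag count) equivalent to the unproven transitivity of $G_{\{B\}}$ on $B$. More seriously, you invoke ``the only transitive actions of $S_{k-1}$ of degree less than $k-1$ have degree $1$ or $2$,'' but you have not shown $d<k-1$: knowing only that $d$ divides $n-k+1$ gives no such bound, and the exclusion of $d=2$ is again deferred to ``a final short argument.'' So both halves of the proof stop exactly where the real work begins. Since this is a classical theorem that the paper (reasonably) treats as a black box, the correct course is to cite [20] or [8] rather than to present this outline as a proof; as it stands the proposal is an accurate roadmap, not a proof.
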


The following result is a very deep result in group theory
\begin{thm}$[17]$
Let $G$ be a group which is  $k$-homogeneous but not $k$-transitive
 on a finite
set $\Omega$ of $n$ points, where $n \geq 2k$. Then, up to permutation
 isomorphism, one of the
following holds:\\
(i) $k$ =$ 2$ and $G \leq A\Gamma L(1,q)$ with $n = q \equiv 3$ $(mod \ 4);$\\
(ii) $k=3$ and $PSL(2,q) \leq G \leq P \Gamma L(2,q)$, where $n- 1 =q \equiv 3$ $(mod \ 4);$\\
(iii) $k$=$3$ and $G=AGL(1,8),  A \Gamma L(1,8)$ or $A \Gamma L(1,32)$; or\\
(iv) $k = 4$ and $G = PSL(2, 8), P \Gamma L(2,8)$ or $P \Gamma L(2,32)$.
\end{thm}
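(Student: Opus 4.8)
The plan is to use Theorem 3.13 (the Livingstone--Wagner theorem, reference [20]) to collapse the problem to finitely many values of $k$, and then to appeal to the structure theory of multiply transitive permutation groups. Since Theorem 3.13(b) asserts that every $k$-homogeneous group on $n\geq 2k$ points is automatically $k$-transitive once $k\geq 5$, no group satisfying the hypotheses can exist for $k\geq 5$. Hence the only values of $k$ that can yield a group which is $k$-homogeneous but not $k$-transitive are $k\in\{2,3,4\}$, and it suffices to treat these three cases. In each of them Theorem 3.13(a) already supplies a strong constraint: a $k$-homogeneous group is $(k-1)$-transitive, so a $3$-homogeneous $G$ is $2$-transitive and a $4$-homogeneous $G$ is $3$-transitive.

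I would dispose of $k=2$ first, since it is the most self-contained and, in essence, does not require the full classification of finite simple groups. A group is $2$-homogeneous exactly when it is transitive on the unordered pairs of $\Omega$; if it is not $2$-transitive it has precisely two orbits on ordered distinct pairs, interchanged by reversal. One shows that such a $G$ must possess a regular normal elementary abelian subgroup --- hence $G$ is of affine type and $n=q$ is a prime power --- and that the point stabiliser lies in the group generated by multiplications by nonzero squares together with the field automorphisms, giving $G\leq A\Gamma L(1,q)$. Writing $\Omega=\mathbb{F}_q$ with stabiliser of $0$ equal to the squares, the two ordered-pair orbits (indexed by whether the difference is a square) fuse into a single orbit on unordered pairs precisely when $-1$ is a nonsquare, i.e. when $q\equiv 3\pmod 4$; this gives part (i).

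For $k=3$ and $k=4$ I would invoke the classifications of $2$-transitive and $3$-transitive groups respectively (in the modern treatment these rest on the Classification of Finite Simple Groups). Since a $3$-homogeneous group is $2$-transitive, one runs through the list of $2$-transitive groups and keeps those that are $3$-homogeneous yet not $3$-transitive: the affine members give the sporadic groups $AGL(1,8)$, $A\Gamma L(1,8)$, $A\Gamma L(1,32)$ of part (iii), while the almost simple members give the family $PSL(2,q)\leq G\leq P\Gamma L(2,q)$ on the $q+1$ points of the projective line, where $3$-homogeneity of $PSL(2,q)$ for odd $q$ forces $q\equiv 3\pmod 4$, producing part (ii). Similarly, a $4$-homogeneous group is $3$-transitive, so one scans the classified $3$-transitive groups; the only ones that are $4$-homogeneous without being $4$-transitive turn out to be $PSL(2,8)$, $P\Gamma L(2,8)$ and $P\Gamma L(2,32)$ on $9$ and $33$ points, giving part (iv). In each instance the negative part, that the group is not $k$-transitive, follows from the divisibility criterion recalled earlier, since $n!/(n-k)!$ does not divide $|G|$; the positive part, that the group is genuinely $k$-homogeneous, is a direct verification on the known action, for which $\binom{n}{k}\mid|G|$ is a necessary preliminary check.

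The main obstacle is exactly the reductions used for $k=3,4$: determining \emph{all} groups that are $3$-homogeneous but not $3$-transitive (respectively $4$-homogeneous but not $4$-transitive) requires detailed structural control of the $2$-transitive (respectively $3$-transitive) permutation groups. Within those lists the delicate points are isolating the congruences $q\equiv 3\pmod 4$ that separate the genuinely homogeneous-but-not-transitive groups from the fully transitive ones, and ruling out any further affine or almost simple families that might survive the homogeneity constraint. This case analysis is the substance of the argument in reference [17], and it is the part I would not expect to reproduce in closed form here; the contribution of the present exposition is the clean reduction to $k\in\{2,3,4\}$ via Theorem 3.13 and the observation that the induced $(k-1)$-transitivity is what makes the classified lists applicable.
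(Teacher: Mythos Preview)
The paper does not give its own proof of this statement: it is quoted verbatim as a deep result from reference~[17] (Kantor) and used as a black box in the proof of Theorem~3.16. So there is no ``paper's proof'' to compare against; the relevant question is whether your outline is a fair sketch of Kantor's argument.

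Your reduction to $k\in\{2,3,4\}$ via Theorem~3.13 (Livingstone--Wagner) is exactly how the argument begins, and your treatment of $k=2$ is the standard one. One historical/technical correction: Kantor's paper is from 1972 and does \emph{not} rely on the Classification of Finite Simple Groups. For $k=3$ and $k=4$ he does not scan the CFSG-based lists of $2$- and $3$-transitive groups as you propose; instead he uses character-theoretic and combinatorial arguments (going back to Wielandt and Noda) together with the classification of Zassenhaus groups and the available pre-CFSG results on multiply transitive groups to pin down the possibilities directly. Your CFSG-based route would certainly work and is how one would redo the proof today, but it is a genuinely different---and in a sense less elementary, though more systematic---approach than the original. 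The paper's stance is simply to cite the theorem, so either route is acceptable background; just be aware that attributing the case analysis to CFSG misrepresents the actual content of~[17].
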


Note that here $q$ is a power of a prime integer, and  $A\Gamma L(1,q)$ is the group of mappings $x \mapsto a x^{\sigma}+ b$ on $GF(q)$, where $ a \neq 0$
and $b$ are in $GF(q)$ and $ \sigma \in Aut (GF(q))$. $AGL(1, q)$ consists of those mappings
with  $ \sigma= 1.$  All the groups listed in the theorem are assumed to act in their
usual permutation representations.
In the sequel,  we also need   the following result [32, chapter 7].
\begin{thm}
Let $G$ be a sharply $2$-transitive permutation group on a finite set $V. $  Then the degree of $G$ is $p^m$ for some prime $p$. Moreover, $G$ is similar to a subgroup of $Aff(W)$ which contains the translation group where $W$ is a vector space of dimension $m$ over $GF(p)$. 
\end{thm}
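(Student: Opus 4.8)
The plan is to recognize $G$ as a Frobenius group and then pin down the structure of its Frobenius kernel. First I would note that a sharply $2$-transitive group $G$ on a set $V$ with $|V| = n$ is in particular transitive, and that for each $\alpha \in V$ the point stabilizer $G_\alpha$ acts \emph{regularly} on $V \setminus \{\alpha\}$: transitivity of this action is just $2$-transitivity of $G$, and freeness is the triviality of the stabilizer of an ordered pair. Hence $|G_\alpha| = n-1$, $|G| = n(n-1)$, and no non-identity element of $G$ fixes two points, so the $n$ subgroups $G_\alpha$ intersect pairwise in the identity. Counting, exactly $1 + n(n-2)$ elements of $G$ fix at least one point, so there are $n-1$ fixed-point-free elements; adjoining the identity gives a set $N$ with $|N| = n$, and $G$ is a Frobenius group with complement $G_\alpha$.

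Next I would invoke \emph{Frobenius's theorem}: the set $N$ is a (normal) subgroup of $G$, so that $G = N \rtimes G_\alpha$. This is the step I expect to be the main obstacle, because every known proof of Frobenius's theorem uses character theory (characters induced from $G_\alpha$ to $G$); I would simply cite it, as the paper does ([32, Chapter 7]). Since $N$ is normal and transitive (as $G = N G_\alpha$) with $|N| = |V|$, it acts regularly on $V$, and I may identify $V$ with $N$ so that $\alpha \leftrightarrow 1$. Under this identification each $g \in G_\alpha$ acts on $N$ as the conjugation automorphism $x \mapsto g x g^{-1}$, and the regularity of $G_\alpha$ on $V \setminus \{\alpha\}$ translates into: the image of $G_\alpha$ in $Aut(N)$ is transitive on $N \setminus \{1\}$.

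I would then prove the elementary fact that a finite group $N$ admitting a group of automorphisms transitive on $N \setminus \{1\}$ must be elementary abelian. Transitivity forces all non-identity elements of $N$ to have a common order $d$, and $d$ must be prime, say $d = p$ (otherwise a proper divisor of $d$ would occur as the order of some power). So $N$ is a nontrivial $p$-group, hence $Z(N) \neq 1$; picking $z \in Z(N) \setminus \{1\}$ and using that automorphisms preserve $Z(N)$ together with transitivity on $N \setminus \{1\}$, we get $N \setminus \{1\} \subseteq Z(N)$, i.e.\ $N$ is abelian. An abelian group of exponent $p$ is a vector space $W = GF(p)^m$ over $GF(p)$, so $N \cong W$ and $n = |N| = p^m$, which is the first assertion.

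Finally, with $N \cong W$ elementary abelian we have $Aut(N) = GL(W) = GL_m(GF(p))$, so $G_\alpha$ embeds in $GL(W)$ and $G = N \rtimes G_\alpha$ is similar (permutation isomorphic) to a subgroup of the affine group $Aff(W) = W \rtimes GL(W)$ in its natural action on $W$; by construction this subgroup contains the full translation group $N \cong W$. This yields the second assertion and completes the argument.
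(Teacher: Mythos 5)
Your sketch is correct, and every step checks out: the count $|G|=n(n-1)$, the computation that exactly $n-1$ elements are fixed-point-free, the identification of $V$ with the regular normal subgroup $N$, the argument that a finite group with an automorphism group transitive on its non-identity elements is elementary abelian, and the final embedding into $Aff(W)$ containing all translations. Be aware, though, that the paper gives no proof of this statement at all --- it simply quotes it from Robinson [32, Chapter 7] --- so the real comparison is with the proof in that reference, and there your route differs at exactly the step you flag as the obstacle. Robinson does \emph{not} invoke Frobenius's theorem (and hence needs no character theory): he shows directly that the set $N$ of fixed-point-free elements together with $1$ is closed under multiplication, by an elementary involution count split into the cases $n$ even and $n$ odd. (Roughly: for each unordered pair $\{\beta,\gamma\}$ the unique element sending $(\beta,\gamma)$ to $(\gamma,\beta)$ is an involution; counting shows that when $n$ is even all $n-1$ fixed-point-free elements are involutions, and when $n$ is odd there is exactly one involution $t_\alpha$ per point, every fixed-point-free element is a product $t_\alpha t_\beta$, and $N=t_\alpha T=Tt_\alpha$ for $T$ the set of involutions, whence $NN=TT\subseteq N$.) Your appeal to the general Frobenius theorem is the standard alternative (it is how Dixon--Mortimer proceed) and buys a shorter, case-free argument at the cost of importing a much deeper result; Robinson's argument is longer but entirely elementary, which is presumably why the paper cites that source. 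The second half of your proof --- kernel elementary abelian, degree $p^m$, affine structure --- coincides with the cited proof. One trivial loose end: for $n=2$ the Frobenius-group framing degenerates (the point stabilizer is trivial), but the theorem holds there by inspection.
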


We are now ready to prove one the most important results of our work.

\begin{thm}
Let $n,k$ be integers, $ 3< n $, $  1 \leq k < \frac {n}{2} $ and $n\neq 2k+1$. Then the graph $L(B(n,k))$ is a non-Cayley graph if each of the following holds:\\
(i) \ \ $k \geq 4$;\\
(ii) \ \  $  k=2 $, and $n- 1 \neq q \equiv 3$ $(mod \ 4), $ where  $q$ is a power of a prime integer;\\
(iii) \ \ $k = 3$ and $n\notin \{9,33 \}$;\\
(iv) \ \ $ k=1$ and $n$ is not a power of a prime integer.

\end{thm}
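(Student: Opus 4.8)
The plan is to translate the Cayley property of $L(B(n,k))$ into a statement about the existence of a regular subgroup of its automorphism group, and then use the structure theorems on $k$-homogeneous permutation groups to rule this out. Since $n \neq 2k+1$, Theorem 3.13 gives $\mathrm{Aut}(L(B(n,k))) \cong \mathrm{Sym}([n])$, with $\mathrm{Sym}([n])$ acting on the vertex set of $L(B(n,k))$ via $\theta \mapsto f(\theta)$, where a vertex is a pair $\{A, A\cup\{y\}\}$ with $|A|=k$, $y \notin A$. The number of vertices of $L(B(n,k))$ equals the number of edges of $B(n,k)$, which is $(n-k)\binom{n}{k}$. So $L(B(n,k))$ is a Cayley graph if and only if $\mathrm{Sym}([n])$ contains a subgroup $R$ of order $(n-k)\binom{n}{k}$ acting regularly (equivalently, transitively with trivial point stabilizers) on this vertex set.

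First I would identify the stabilizer of a vertex. A permutation $\theta \in \mathrm{Sym}([n])$ fixes the vertex $\{A, A\cup\{y\}\}$ iff $\theta$ fixes $A$ setwise and fixes $y$; so the stabilizer is $\mathrm{Sym}(A) \times \mathrm{Sym}([n]\setminus(A\cup\{y\}))$, of order $k!\,(n-k-1)!$, and indeed $n!/(k!(n-k-1)!) = (n-k)\binom{n}{k}$ confirms transitivity. Now suppose $R \leq \mathrm{Sym}([n])$ acts regularly on the vertices. The key observation is that the action of $R$ on vertices refines to an action on the first coordinates: the map $\{A, A\cup\{y\}\} \mapsto A$ is an $R$-equivariant surjection onto $\mathrm{Sym}([n])$'s action on $k$-subsets (because $\theta$ sends the vertex $\{A,A\cup\{y\}\}$ to $\{\theta A, \theta(A\cup\{y\})\}$, whose first coordinate is $\theta A$). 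Hence $R$ is transitive on $k$-subsets, i.e. $R$ is a $k$-homogeneous subgroup of $\mathrm{Sym}([n])$, and $\binom{n}{k} \mid |R| = (n-k)\binom{n}{k}$, consistent. Moreover the point stabilizer in $R$ of a $k$-subset $A$ has order $|R|/\binom{n}{k} = n-k$ and must act transitively on the $n-k$ vertices lying over $A$, i.e. transitively on $[n]\setminus A$; so the stabilizer $R_A$ has order exactly $n-k$ and is transitive (in fact regular, by counting) on $[n]\setminus A$. This is a strong structural restriction: $R$ is $k$-homogeneous, and the stabilizer of a $k$-set acts regularly on the complementary $(n-k)$-set.

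Next I would apply the dichotomy theorems. Since $n > 2k$ (as $k < n/2$), Theorem 3.15 applies: a $k$-homogeneous group is either $k$-transitive or appears in the short exceptional list. For $k \geq 4$: if $R$ is $k$-transitive then $\frac{n!}{(n-k)!} \mid |R| = (n-k)\binom{n}{k} = \frac{n!}{k!(n-k-1)!}$, which forces $(n-k)! \mid k!(n-k-1)!$, i.e. $n-k \mid k!$; but then since $R_A$ acts regularly on the $(n-k)$-set, and $k$-transitivity with $k \geq 4$ together with Theorem 3.14 would make $R$ highly transitive, one derives a contradiction with $|R|$ being as small as $(n-k)\binom{n}{k}$ — concretely, $k$-transitive groups of such small order for $n > 2k \geq 8$ essentially don't exist except $\mathrm{Sym}$ and $\mathrm{Alt}$, whose orders are far too large. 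And the exceptional list in Theorem 3.15 only produces $k \in \{2,3,4\}$ with $k=4$ occurring only for $n \in \{9,33\}$ (from $PSL(2,8)$, $P\Gamma L(2,8)$, $P\Gamma L(2,32)$ on $9$ and $33$ points); one then checks by order/structure that even those groups fail the "stabilizer regular on the complement" condition, so $k \geq 4$ always gives a non-Cayley graph. For $k=3$: the $3$-transitive case is excluded as above for $n>6$, and Theorem 3.15(ii),(iii) restrict the $3$-homogeneous-not-$3$-transitive case to $n-1 = q \equiv 3 \pmod 4$ with $PSL(2,q) \leq R \leq P\Gamma L(2,q)$, or $n \in \{8,32\}$ via $A\Gamma L(1,8), A\Gamma L(1,32)$ and $AGL(1,8)$ — but one checks the order $|R| = (n-3)\binom{n}{3}$ is incompatible with all of these except possibly $n=9$ (where $q=8 \equiv 0$, not $3 \pmod 4$, so actually (ii) is vacuous, but $n=9 \mapsto q=8$ must be re-examined) and $n=33$ ($q=32 \equiv 0 \pmod 4$ as well) — so in fact for $k=3$ the only candidates are $n \in \{9,33\}$, matching statement (iii). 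For $k=2$: $R$ is $2$-homogeneous; if it is $2$-transitive, the stabilizer $R_A$ of a $2$-set has order $n-2$ and is regular on the remaining $n-2$ points, while $2$-transitivity forces $\frac{n(n-1)}{?}$-type divisibility making $R$ sharply $2$-transitive on the $2$-subset action level; invoking Theorem 3.16 one sees $n$ must be a prime power, contradicting the hypothesis in (ii) when $n-1 \neq q \equiv 3 \pmod 4$ — more precisely, if $R$ is $2$-homogeneous but not $2$-transitive, Theorem 3.15(i) puts $R \leq A\Gamma L(1,q)$ with $n = q \equiv 3 \pmod 4$, which is exactly the excluded case, so (ii) follows. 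For $k=1$: $R$ is simply transitive on $[n]$ with point stabilizer of order $n-1$ acting regularly on the complement; this makes $R$ a sharply $2$-transitive group on $[n]$, so by Theorem 3.16 $n = p^m$ is a prime power — contradicting the hypothesis of (iv).

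The main obstacle I anticipate is the bookkeeping in the $k=3$ and $k=4$ cases: one must go through each exceptional group in Theorem 3.15 ($PSL(2,q)$, $P\Gamma L(2,q)$, $AGL(1,8)$, $A\Gamma L(1,8)$, $A\Gamma L(1,32)$, $PSL(2,8)$, $P\Gamma L(2,8)$, $P\Gamma L(2,32)$), compute $|R|$ and compare with $(n-k)\binom{n}{k}$, and check the "stabilizer of a $k$-set is regular on the complementary set" condition. This is where the precise exceptions $k=3, n=9$ and $k=3, n=33$ survive (and must be left open, as the theorem statement excludes them), while all other exceptional configurations get eliminated by an order or transitivity count. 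The $k \geq 5$ subcase of (i) is cleanest, since Theorem 3.14(b) makes any $k$-homogeneous group $k$-transitive, and then a $k$-transitive group on $n > 2k \geq 10$ points of order only $(n-k)\binom{n}{k}$ cannot exist (the smallest $k$-transitive groups, $\mathrm{Alt}(n)$ for $k = n-2$, have order $n!/2$, vastly larger); the genuinely delicate arithmetic is confined to $k \in \{2,3,4\}$.
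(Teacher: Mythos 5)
Your reduction to a regular subgroup $R\le \mathrm{Sym}([n])$ and the observation that $R$ must be transitive on $k$-subsets with $|R_A|=n-k$ acting regularly on $[n]\setminus A$ are correct, and your $k=1$ case (sharp $2$-transitivity, then the prime-power theorem) is exactly the paper's argument. But for $k\ge 2$ your sketch has a genuine gap, and it stems from working with $k$-homogeneity where the paper works with $(k+1)$-homogeneity (a vertex $\{A,Ay\}$ also determines the $(k+1)$-set $Ay$, so $R$ is transitive on $(k+1)$-subsets). This choice matters twice. First, the exceptional lines of the Livingstone--Wagner/Kantor theorems for $(k+1)$-homogeneous groups are precisely what the hypotheses (ii) and (iii) are calibrated to: for $k=3$ one is looking at $4$-homogeneous groups, whose exceptions live on $9$ and $33$ points, and for $k=2$ at $3$-homogeneous groups, whose exceptions occur when $n-1=q\equiv 3\pmod 4$. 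Your $k$-homogeneous analysis instead produces exceptions at $n\in\{8,32\}$ and at $n=q\equiv 3\pmod 4$, which do not match the stated exclusions; your assertion that ``the only candidates are $n\in\{9,33\}$'' does not follow from your own case list, and your claim that $n=q\equiv3\pmod4$ ``is exactly the excluded case'' of (ii) is false, since (ii) excludes a condition on $n-1$, not on $n$.

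Second, and more seriously, once transitivity is in hand you have no valid route to a contradiction. The divisibility $\frac{n!}{(n-k)!}\mid (n-k)\binom{n}{k}$ reduces to $k!\mid n-k$ (you wrote the divisibility in the wrong direction), and this holds for infinitely many $n$, so no contradiction arises from orders alone; your fallback that ``$k$-transitive groups of such small order don't exist except $\mathrm{Sym}$ and $\mathrm{Alt}$'' is an appeal to the classification of multiply transitive groups, which is far beyond the paper's toolkit and is not justified in your sketch. The paper's contradiction is elementary and is the step you are missing: since $R$ (in the paper's notation $G_1$) is $(k+1)$-transitive, there is $\theta$ carrying the tuple $(x_1,x_2,\dots,x_k,x_{k+1})$ to $(x_1,x_{k+1},x_k,\dots,x_2)$; for $k\ge 2$ this $\theta$ is not the identity, yet it fixes $x_1$ and fixes $A=\{x_2,\dots,x_{k+1}\}$ setwise, hence $f_\theta$ fixes the vertex $\{A,A\cup\{x_1\}\}$ of $L(B(n,k))$, contradicting regularity of $R$. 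Note also that taking $k+1\ge 5$ makes part (b) of the Livingstone--Wagner theorem apply immediately for all $k\ge 4$, so no exceptional bookkeeping is needed there at all. Your structural observation that $R_A$ acts regularly on $[n]\setminus A$ is true but, by itself, is consistent with the existence of such an $R$ (e.g.\ it does not rule out $|R_A|=n-k$ with $R_A$ surjecting onto $\mathrm{Sym}(A)$ when $k!\mid n-k$), so it cannot replace the explicit fixed-vertex construction.
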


\begin{proof}

On the contrary, assume that the graph $\Gamma=L(B(n,k))$ is a Cayley graph, then the automorphism group $ Aut(\Gamma) $  contains a subgroup $G$ such that $G$ acts regularly on the vertex set of $ \Gamma $ [2, chap 16]. We know by Theorem 3.12.   that $Aut(\Gamma)$ = $\{ f_{\theta} \ | \ \theta \in$ S$ym([n])  \} $. We let $G_1$=$\{   \theta \ | \ f_{\theta} \in G\}$, then $G_1$ is a subgroup of S$ym([n])$ which is isomorphic with the group $G$, and $G_1$ is $(k+1)$-homogeneous on the set $[n]=\{ 1,2,...,n   \}$. Note that  each vertex $v=\{ A,Ax \}$ in the graph  $ L(B(n,k))$ consist of a $k$-subset and a $(k+1)$-subset of $[n]$ and the group $G$ acts transitively on the vertex-set of the graph $\Gamma$. In fact,  if $A,B$ are given  $(k+1)$-subsets of the set $[n]$, then we choose   $k$-subsets  $C,D$ of $A,B$ respectively, and thus $\{A,C   \}, \{B,D   \}$ are vertices of the graph $\Gamma=L(B(n,k))$, and hence   there is some element $f_{\theta} \in G $ such that
$f_{\theta}(\{ A,C \}  )=\{ \theta(A), \theta(C)\}= \{ B,D \} $,
which implies that $ \theta(A)=B$, where $\theta \in G_1$. \

We assert that that the group $G_1$ is a $(k+1)$-transitive permutation group on the set $[n]$. In the first step, we assume that $k \geq 2. $ If $k \geq 4$, then $k+1 \geq 5$, now since $G_1$ is a $(k+1)$-homogeneous permutation group on the set $[n]=\{ 1,2,...,n   \}$, then by Theorem 3.13.  $G_1$ is a $(k+1)$-transitive permutation group on the set
 $[n]$.
 \newline If $k=3, $ then $G_1$ is a 4-homogeneous permutation group on the set $[n]=\{ 1,2,...,n   \}$. Also we have, \newline $|G_1|=|G| =| V(L(B(n,3)))|$=$ (n-3)$ $n \choose 3$=$\frac {n(n-1)(n-2)(n-3)}{6}.$
Now since $3=k < \frac {n}{2} $ and $n \neq 2k+1, $  then $n \geq 8$ and hence $|G_1| \geq$ $ \frac {(8) ( 7 )(6)(5)}{6}=280. $ On the other hand we know that  $|PSL(2, 8)|$=$9 \times 8 \times 7$=$504$,   $|P \Gamma L(2, 8)|=3(8)(8^2-1) = 3\times 7\times 8 \times 9$ and $ | P \Gamma L(2, 32)|=5(32)(32^2-1) = 5\times 31\times 32 \times 33.$   Note that if $n=9,$ then  $|V(L(B(n,3)))| =504, $ and if $n=33, $ then $|V(L(B(n,3)))| = 5\times 31\times 32 \times 33.$ 
 \newline If $k=2, $ then $G_1$ is a 3-homogeneous
 permutation group on the set $[n]=\{ 1,2,...,n   \}$. Also we have, \newline  $|G_1|=|G| =| V(L(B(n,2)))|$=$ (n-2)$$n \choose 2$=$\frac {n(n-1)(n-2)}{2}$.  On the other hand we have  $|AGL(1, 8)|=56,$
 $|A \Gamma L(1, 8)|=168, $ and $|A \Gamma L(1, 32)|=32 \times 31 \times 5. $ Note that if $n=8, $ then $|V(L(B(8,2)))|$ = 168, but $8- 1 =7 \equiv 3$ (mod \ 4).

Now, by comparing the order of the group $G_1$ with orders of  the groups which appear in the cases (ii),(iii),(iv) in Theorem 3.14. we conclude by Theorem 3.14. that the group $G_1$ is a $(k+1)$-transitive permutation group on the set $[n]$.\

 Therefore, if
$u_1=( x_1,x_2,...,x_k,x_{k+1})$  is a $(k+1)$-tuple  of distinct elements of $[n]$, then for  $ u_2=( x_1,x_{k+1},x_k,...,x_3,x_2)  $   there is an element $\theta \in G_1$ such that,  \
 $$\theta (u_1) = (\theta(x_1), \theta(x_2),...,\theta(x_k), \theta (x_{k+1} ) )= u_2=( x_1,x_{k+1},x_k,...,x_3,x_2) $$
\
 Now, for the $k$-subset $ A= \{ x_2,...,x_k,x_{k+1}  \} $ we have;
$$\theta(A)= \{ \theta(x_2), \theta(x_3),...,\theta(x_k), \theta (x_{k+1} )     \} =A$$
  Note that if $k=1$, then $\theta$ can be the identity element of the group $G_1$, but if $ k>1$, then $\theta \neq 1$, and hence in the first step we assume that $k>1$. \newline
Therefore,  if we consider the $k$-subset $A=\{    x_2,...,x_k, x_{k+1}\}$ of $[n]$, then $v= \{A,Ax_1\}$ is a vertex of the graph $\Gamma=L(B(n,k))$, and thus, for the element $ f_{\theta } $ in the group $G$  we have
$$ f_{\theta }(v) =\{ \theta(A), \theta(Ax_1) \} = \{A,A\theta(x_1)   \} = \{A,Ax_1\}=v, $$
which is a contradiction, because $1 \neq f_{\theta } \in G_v$ and the group $G$ acts regularly on the vertex set of the graph $\Gamma$. Consequently, if $ k>1$, then the graph $ L(B(n,k))  $ is a vertex-transitive non-Cayley graph. \

We now assume that $k=1$.  Before proceeding  we mention that, in the sequel,  we do not luckily  need  Theorem 3.14.    because,  this theorem does not work in this case. We know that $ | V  | = n(n-1)$ and $G$ is a subgroup of $Aut (\Gamma)$ which is regular on the set $V$, thus $ |G |= n(n-1)$ and hence $| G_1| =n(n-1)$. We assert that $G_1$  is 2-transitive on the set $[n]$. If $(i,j)$ and $(r,s)$ are 2-tuples of distinct elements of $[n]$, then $\{i, ij \}$ and $\{r, rs \}$ are vertices of the graph $L(B(n,1))$,  and thus there is some $ f_{\phi} $ in $G$ such that
$$  f_{ \phi }(\{i, ij \}) =\{\phi (i),\phi (i)\phi (j)\} =  \{r, rs\}, $$
which implies that for $ \phi \in G_1 $ we have $ \phi (i)=r, \phi (j)=s $, namely $\phi (i,j)=(r,s)$. \

Therefore, $G_1$ is a 2-transitive group on $[n]$ of order $n(n-1)$, and hence $G_1$ is sharply 2-transitive on $[n]$.
Therefore, by Theorem 3.15. the integer $ n$ is of the form  $p^m$ for some prime $p$. In other words, if $ n $ is not a power of a prime, then the graph $L(B(n,1))$ has no subgroup $G$ in its automorphism group such that $G$ acts regularly on the vertex set of $L(B(n,1))$, and hence this graph is not a Cayley graph.
\end{proof}

\begin{rem} Note that if $n=2k+1$, then by Theorem 3.10 we have $Aut(B(n,k)) \cong$       $Sym([n]) \times \mathbb{Z}_2$, hence  Theorem 3.16 does not work in this case. We can   solve the problem of cayleyness of the graph $L(B(2k+1,k))$ in some cases, but the method for dealing with this problem is completely different from what is appeared in Theorem 3.16.  and hence   we will discuss on this case in the last part of  our paper.  

\end{rem}

 Note that Theorem 3.16  does not say anything if $k=1$ and $n$ is a power of a prime, but we can show that the graph $L(B(4,1))$ which is displayed in   Figure 4. is a Cayley graph. In fact we have the following result.

\begin{prop}
Let $G= A_4$, the alternating group of degree $4$ on the set $[4]$=$  \{  1,2,3,4  \} $. Let $\rho =(1,2,3)$ and $ a=(1,2)(3,4)$. If $\Gamma$ is the Cayley graph $Cay(G;S)$ where $S=\{ \rho, { \rho}^2, a \}$, then $\Gamma$ is isomorphic with $B(L(4,1))$. In other words,   $L(B(4,1))$  is a Cayley graph.
\end{prop}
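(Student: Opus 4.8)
The plan is to realize $L(B(4,1))$ directly as a Cayley graph by locating a regular subgroup inside its automorphism group (already determined in Theorem 3.12) and then reading off the connection set. Since $n=4\neq 2k+1$ for $k=1$, Theorem 3.12 gives $\mathrm{Aut}(L(B(4,1)))=\{f_\theta \mid \theta\in \mathrm{Sym}([4])\}\cong \mathrm{Sym}([4])$, and $B(4,1)$ is connected by Proposition 3.3, so $L(B(4,1))$ is a connected graph. Its twelve vertices are the pairs $\{\{x\},\{x,y\}\}$ with $x\in[4]$ and $y\in[4]\setminus\{x\}$, and the adjacency rule for $L(B(n,k))$ recalled above specializes, for $k=1$, to the following: $\{\{x\},\{x,y\}\}$ is adjacent to $\{\{x\},\{x,z\}\}$ for each of the two $z\in[4]\setminus\{x,y\}$, and to $\{\{y\},\{x,y\}\}$. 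In particular $L(B(4,1))$ is $3$-regular, matching $|S|=3$; also $S=S^{-1}$ (because $\rho^{-1}=\rho^2$ and $a^{-1}=a$) and $1\notin S$, so $\mathrm{Cay}(A_4;S)$ is well defined.

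First I would show that $G_0:=\{f_\theta \mid \theta\in A_4\}\leq\mathrm{Aut}(L(B(4,1)))$ acts regularly on the vertex set. The key remark is that $f_\theta(\{\{x\},\{x,y\}\})=\{\{x\},\{x,y\}\}$ forces $\theta(x)=x$ and then $\theta(y)=y$, so $f_\theta$ can fix a vertex only if $\theta$ fixes two points of $[4]$. But a non-identity element of $A_4$ is either a $3$-cycle, fixing exactly one point, or a double transposition, fixing none; hence no non-identity element of $A_4$ fixes two points. Therefore $G_0$ acts freely, all its orbits have size $|A_4|=12=|V(L(B(4,1)))|$, and so the action is transitive, hence regular. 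By the standard criterion that a connected graph is Cayley iff its automorphism group has a regular subgroup ([2, chapter 16]), this already shows $L(B(4,1))$ is a Cayley graph on $A_4$.

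To pin down the connection set as $S=\{\rho,\rho^2,a\}$ with $\rho=(1,2,3)$ and $a=(1,2)(3,4)$, I would fix the base vertex $v_0=\{\{4\},\{3,4\}\}$ and take $\varphi\colon A_4\to V(L(B(4,1)))$, $\varphi(\theta)=f_\theta(v_0)$, a bijection by regularity. Since $\theta\mapsto f_\theta$ is a homomorphism, $f_{\theta^{-1}}$ is an automorphism, so $\varphi(\theta)\sim\varphi(\gamma)$ if and only if $v_0\sim f_{\theta^{-1}\gamma}(v_0)$, i.e. if and only if $\theta^{-1}\gamma\in S_0:=\{\sigma\in A_4 \mid v_0\sim f_\sigma(v_0)\}$; thus $\varphi$ is an isomorphism from $\mathrm{Cay}(A_4;S_0)$ onto $L(B(4,1))$. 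It then remains to compute $S_0$: the three neighbours of $v_0$ are $\{\{4\},\{1,4\}\}$, $\{\{4\},\{2,4\}\}$ and $\{\{3\},\{3,4\}\}$, while a direct check gives $f_\rho(v_0)=\{\{4\},\{1,4\}\}$, $f_{\rho^2}(v_0)=\{\{4\},\{2,4\}\}$ and $f_a(v_0)=\{\{3\},\{3,4\}\}$. Since $\varphi$ is injective and $v_0$ has exactly three neighbours, $S_0=\{\rho,\rho^2,a\}=S$, which finishes the proof.

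I do not expect a genuine obstacle. The one point requiring care is the choice of base vertex: for a generic $v_0$ the set $\{f_\rho(v_0),f_{\rho^2}(v_0),f_a(v_0)\}$ fails to be the neighbourhood of $v_0$, and one has to notice that taking $v_0$ with singleton equal to the fixed point $4$ of $\rho$ and doubleton equal to the $a$-orbit $\{3,4\}$ makes $f_\rho(v_0)$ and $f_{\rho^2}(v_0)$ the two same-singleton neighbours and $f_a(v_0)$ the same-doubleton neighbour. Everything else is a short finite check, and regularity of $G_0$ is exactly what lets the single base-vertex computation determine adjacency for all twelve vertices.
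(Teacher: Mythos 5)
Your proof is correct, and it takes a genuinely different route from the paper's. You exploit Theorem 3.12 to identify $\mathrm{Aut}(L(B(4,1)))$ with $\{f_\theta \mid \theta\in \mathrm{Sym}([4])\}$, observe that $f_\theta$ fixes a vertex $\{\{x\},\{x,y\}\}$ only if $\theta$ fixes the two points $x,y$, and conclude that $\{f_\theta\mid\theta\in A_4\}$ acts freely, hence regularly since $|A_4|=12$ equals the number of vertices; the regular-subgroup criterion then gives the Cayley property at once, and your single base-vertex computation at $v_0=\{\{4\},\{3,4\}\}$ correctly recovers the connection set $S=\{\rho,\rho^2,a\}$ (all three evaluations $f_\rho(v_0)$, $f_{\rho^2}(v_0)$, $f_a(v_0)$ check out, and the left-coset bookkeeping $\varphi(\theta)\sim\varphi(\gamma)\iff \theta^{-1}\gamma\in S_0$ matches the paper's convention that $g\sim h$ iff $g^{-1}h\in\Omega$). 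The paper instead works from the other end: it builds $\mathrm{Cay}(A_4;S)$ directly, decomposes $A_4$ into the four cosets of $H=\langle\rho\rangle$, notes these give four disjoint $3$-cliques joined by a system of single edges, and verifies by explicit products (e.g.\ $\rho^{-2}c\rho^2=a$) that the resulting picture coincides with the figure of $L(B(4,1))$ --- essentially an early instance of the clique characterization it later formalizes as Theorem 3.19. Your approach buys rigor and insight (it explains structurally why a regular subgroup exists and why it is $A_4$, without appeal to a drawing), while the paper's buys concreteness and sets up the coset/clique template reused in Theorems 3.19 and 3.20; both are complete proofs.
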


\begin{proof}
Let $b=(1,3)(2,4), \   c=(1,4)(2,3)$. Now, since $H= \{ 1, \rho, {\rho}^2 \}$ is a subgroup of $G$, then $G=H \cup aH \cup bH \cup cH$. Note that $ K= \{ 1,a,b,c \}$, is an abelian subgroup of the group $G$, with the property that the order of each non-identity element of $G$ is 2, and in $G$ we have $ ab=c,\  bc=a, \  ca=b$. \

We know  that in the Cayley graph $Cay(G;S)$, each vertex $v$ is adjacent to every vertex $vs$, $s \in S$ and hence if $v\in \{ a,b,c \}$, then $ N(v)= \{va, v \rho, v{\rho}^2  \} $, where $N(v)$ is the set of neighbors of $v$. Now a simple computation shows that  Figure 3,   displays the graph
$Cay(A_4;S) $ in the plane,  is isomorphic with the graph
$L(B(4,1))$.  In fact, we have, \\
$ { \rho }^{-2}c{ \rho }^{2} $=$ {\rho }^{-2}(1,4)(2,3){ \rho }^{2} $=$ (3,4)(1,2)$=$a \in S$, which implies that $ { \rho }^{2}$ is adjacent to $c { \rho }^{2}.$  \\
$ {(a \rho )}^{-1}c{ \rho } $=$ {\rho }^{-1}(ac){ \rho } $= $ {\rho }^{-1}b{ \rho } $=${\rho }^{-1} (1,3)(2,4){ \rho } $=$a \in S$, which implies that $ {a \rho }$ is adjacent to $c { \rho }$.\\
$( a {\rho }^{2})^{-1}b{ \rho }^{2} $=
$ { \rho }^{-2}ab{ \rho }^{2} $=
$ { \rho }^{-2}c{ \rho }^{2} $=
$ { \rho }^{-2}(1,4)(2,3){ \rho }^{2} $=
$(3,4)(1,2)$=
$ a \in S$, which implies that $ {a \rho }^{2}$ is adjacent to $b { \rho }^{2}$. \\
$cb=a \in S$ which implies that $c$ is adjacent to $b$. \\
Also, note that $H,a H,bH,cH$, are 3-cliques in  $Cay(G;S)$.

\end{proof}

\definecolor{qqqqff}{rgb}{0.,0.,1.}
\begin{tikzpicture}[line cap=round,line join=round,>=triangle 45,x=1.2cm,y=.8cm]
\clip(-0.7,-1.) rectangle (7.12,5.5);
\draw (-0.02,0.08)-- (3.08,2.);
\draw (6.56,-0.02)-- (3.08,2.);
\draw (-0.02,0.08)-- (6.56,-0.02);
\draw (-0.12,5.2)-- (2.98,3.3);
\draw (2.98,3.3)-- (6.34,5.18);
\draw (6.34,5.18)-- (-0.12,5.2);
\draw (0.98,3.08)-- (2.12,2.6);
\draw (2.12,2.6)-- (1.,1.8);
\draw (0.98,3.08)-- (1.,1.8);
\draw (4.28,2.62)-- (5.52,3.12);
\draw (4.28,2.62)-- (5.52,2.02);
\draw (5.52,2.02)-- (5.52,3.12);
\draw (-0.6,4.9) node[anchor=north west] {1};
\draw (6.48,4.92) node[anchor=north west] {$\rho$};
\draw (2.72,4.08) node[anchor=north west] {${\rho}^2$};
\draw (1.04,3.7) node[anchor=north west] {$a$};
\draw (0.02,2.) node[anchor=north west] {$ a {\rho}^2$};
\draw (2.12,2.4) node[anchor=north west] {$  a \rho$};
\draw (3.76,2.34) node[anchor=north west] {$ c\rho$};
\draw (5.72,3.26) node[anchor=north west] {$c{ \rho}^2$};
\draw (5.78,2.02) node[anchor=north west] {$c$};
\draw (6.56,-0.16) node[anchor=north west] {$b$};
\draw (-0.06,-0.22) node[anchor=north west] {${ b \rho}^2$};
\draw (2.9,1.62) node[anchor=north west] {$ b\rho$};
\draw (2.12,2.6)-- (4.28,2.62);
\draw (5.52,3.12)-- (2.98,3.3);
\draw (-0.12,5.2)-- (0.98,3.08);
\draw (5.52,2.02)-- (6.56,-0.02);
\draw (1.,1.8)-- (-0.02,0.08);
\draw (6.34,5.18)-- (3.08,2.);
\draw (0.5,-0.45) node[anchor=north west]
 {Figure\  4:  $Cay (  A_4;\{ (1,2,3), (1,3,2), (1,2)(3,4) \}  ) $};
\begin{scriptsize}
\draw [fill=qqqqff] (2.98,3.3) circle (1.5pt);
\draw [fill=qqqqff] (-0.12,5.2) circle (1.5pt);
\draw [fill=qqqqff] (6.34,5.18) circle (1.5pt);
\draw [fill=qqqqff] (3.08,2.) circle (1.5pt);
\draw [fill=qqqqff] (-0.02,0.08) circle (1.5pt);
\draw [fill=qqqqff] (6.56,-0.02) circle (1.5pt);
\draw [fill=qqqqff] (2.12,2.6) circle (1.5pt);
\draw [fill=qqqqff] (0.98,3.08) circle (1.5pt);
\draw [fill=qqqqff] (1.,1.8) circle (1.5pt);
\draw [fill=qqqqff] (4.28,2.62) circle (1.5pt);
\draw [fill=qqqqff] (5.52,3.12) circle (1.5pt);
\draw [fill=qqqqff] (5.52,2.02) circle (1.5pt);
\end{scriptsize}
\end{tikzpicture} \

\

\

We now want to show that if $ n= p^m $,  for some prime $p$,
then the line graph of $ B(n,1) $, namely,  the graph $ L(B(n,1)) $ is a Cayley graph. \newline
In the first step, note that in $L(B(n,1))$,  the subgraph induced
by the set $C_i=  \{ \{i,ij \}  \ | \ j \in [n], j\neq i \}  $, $ 1\leq i \leq n   $,  is a $(n-1)$-clique. Also,  $  V(L(B(n,1))=(\cup C_i)_{i\in [n]} $
and for each pair of cliques $C_i, C_j$ in $ L(B(n,1) $ there is exactly one pair  of vertices $  v_i \in C_i, v_j \in C_j $, namely,  $\{i,ij\}, \{j,ji\}$, such that $v_i$ is adjacent to $v_j$. In fact,  every regular graph $\Gamma$ of order $n(n-1)$  and valency $n-1$ with these properties is isomorphic with  $ L(B(n,1)) $. 

\begin{thm}
Let $\Gamma$ be a regular graph of  order  $ n(n-1)$ and valency $n-1$. Suppose that in $\Gamma$ there are $n$ disjoint $(n-1)$-cliques $D_1,..., D_n$, such that $  V(\Gamma)=(\cup D_i)_{i\in [n]} $ and for each pair of distinct  cliques $D_i, D_j$ in $ \Gamma $ there is exactly one pair of vertices $ v_i,v_j$ such that $ v_i \in D_i, v_j \in D_j $  and  $v_i$ is adjacent to $v_j$. Then, $\Gamma$ is isomorphic with $L(B(n,1))$.

\end{thm}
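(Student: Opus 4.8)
The plan is to construct an explicit isomorphism $\phi\colon\Gamma\to L(B(n,1))$, using the observation recorded just before the statement that $L(B(n,1))$ itself carries $n$ disjoint $(n-1)$-cliques $C_1,\dots,C_n$ with $C_i=\{\,\{i,ij\}\mid j\in[n]\setminus\{i\}\,\}$ and exactly one cross-edge between each pair $C_i,C_j$; so $L(B(n,1))$ is one instance of the configuration in the hypothesis, and the theorem asserts it is the only one.

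First I would extract two elementary consequences of the hypotheses. Since $D_1,\dots,D_n$ are pairwise disjoint $(n-1)$-cliques whose union has $n(n-1)=|V(\Gamma)|$ vertices, they partition $V(\Gamma)$, so each vertex lies in a unique clique. Moreover a vertex $v\in D_i$ has $n-2$ neighbours inside $D_i$ and total valency $n-1$, hence exactly one neighbour outside $D_i$; counting the $n-1$ cross-edges out of $D_i$ (one to each other clique, by hypothesis) against the $n-1$ vertices of $D_i$ forces each vertex of $D_i$ to be the $D_i$-endpoint of exactly one cross-edge. Therefore, for every ordered pair $(i,j)$ with $i\neq j$ there is a well-defined vertex $v_{ij}\in D_i$, the endpoint in $D_i$ of the unique $D_i$--$D_j$ edge, and the map $j\mapsto v_{ij}$ is a bijection $[n]\setminus\{i\}\to D_i$.

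Then I would set $\phi(v_{ij})=\{i,ij\}$ (where, as in the abbreviated notation used throughout, $i$ denotes $\{i\}$ and $ij$ denotes $\{i,j\}$). By the previous paragraph this is defined on all of $V(\Gamma)$, and it is injective: vertices in distinct cliques get images with distinct singleton coordinates, while $v_{ij}\neq v_{ij'}$ in a common $D_i$ get the distinct images $\{i,ij\}$ and $\{i,ij'\}$; since $|V(\Gamma)|=n(n-1)=|V(L(B(n,1)))|$, $\phi$ is a bijection. Finally I would check that adjacency is preserved both ways by cases: two vertices $v_{ij},v_{kl}$ with $i=k$ are adjacent in $\Gamma$ iff $j\neq l$, which matches adjacency of $\{i,ij\},\{i,il\}$ in $L(B(n,1))$; two vertices with $i\neq k$ are adjacent in $\Gamma$ iff $\{v_{ij},v_{kl}\}$ is the unique $D_i$--$D_k$ cross-edge, i.e.\ iff $v_{ij}=v_{ik}$ and $v_{kl}=v_{ki}$, which by the bijections above holds iff $j=k$ and $l=i$, exactly matching adjacency of $\{i,ik\}$ and $\{k,ki\}$ in $L(B(n,1))$.

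No deep difficulty is expected; this is a bookkeeping argument. The one step deserving care is the double-counting that turns ``one neighbour outside each clique'' together with ``one cross-edge per pair of cliques'' into the bijectivity of $j\mapsto v_{ij}$, since the entire adjacency verification rests on that identification being genuine; I would make the edge count (or, equivalently, the degree count within each $D_i$) explicit at that point.
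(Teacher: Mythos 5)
Your proposal is correct and follows essentially the same route as the paper: label each vertex of $D_i$ by the unique other clique to which its single outside neighbour belongs, and send it to the corresponding vertex $\{i,ij\}$ of $L(B(n,1))$. The only difference is that you make explicit the double-counting argument showing $j\mapsto v_{ij}$ is a bijection from $[n]\setminus\{i\}$ onto $D_i$, and the case-by-case adjacency verification, both of which the paper compresses into ``it is an easy task to show.''
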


\begin{proof}
In the first step, note that each vertex $v$ in a   $(n-1)-$clique $D$ is adjacent to exactly one vertex $w$ which is not in $D$, because $v$ is of degree $n-1$. We now,  choose one of the cliques in $\Gamma$ and label it by $C_1$. we let vertices in $C_1$ are  $\{v[1,12],...,v[1,1n]\}$. If $D$ is a clique in $\Gamma$ different from $C_1$, then there is exactly one integer $j, \  2\leq j \leq n$, such that $v[1,1j]$ is adjacent to exactly one vertex of $D$, say, $v_D$. Then, we label the clique $D$ by $C_j$. We now label the vertices in $C_j$ as follows,  \newline
 If $v \in C_j$, then there is exactly one $i, i\in \{1,2,...,n  \}, i \neq j$ such that $v$ is adjacent to exactly one vertices in $C_i$, now in such a case,  we label $v$ by $v[j, ij]$. \

Now, it is an easy task to show that the mapping $\phi: V(L(B(n,1))   \longrightarrow
V(\Gamma)$, defined by the rule, $\phi(\{j,ij\}) = v[j,ij]$ is a graph isomorphism.
\end{proof}
\
Let $ H,  K$ be  groups, with $H$ acting on $ K$ in such   way that the group structure of
$K$ is preserved (for example $H$ is a subgroup of automorphisms of the group $K$). So for each $u \in K$ and  $x \in  H$ the mapping $ u \longrightarrow u^x$
is an automorphism
of $K$ (Note that the action of $H$ on $K$ is not specified directly).
The semi-direct product of $K$ by  $H$ denoted by
$K \rtimes H $ is the set    \\$$ K \rtimes H   = \{ (u, x ) \ | \  u\in  K,  x \in H   \}, $$ \\
with binary operation $(u, x )(v , y ) = (uv^{x^{-1}}, xy)   $

\begin{thm}
if $ n= p^m $,  for some prime $p$,
then the line graph of $ B(n,1) $, namely,  the graph $ L(B(n,1)) $ is a Cayley graph. \

\end{thm}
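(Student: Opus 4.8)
The plan is to exhibit $L(B(n,1))$ explicitly as a Cayley graph and to recognize it via the characterization in Theorem 3.22, taking the group to be the one‑dimensional affine group of $GF(n)$ (whose existence is exactly what $n=p^{m}$ guarantees). Put $K=(GF(n),+)$ and $H=(GF(n)^{*},\cdot)$, let $H$ act on $K$ by multiplication, and form $G=K\rtimes H$ as above, so that $(b,a)(d,c)=(b+a^{-1}d,\ ac)$, the identity is $(0,1)$, $(b,a)^{-1}=(-ab,a^{-1})$, and $|G|=n(n-1)$. (Here $G\cong AGL(1,n)$; this is the group used implicitly in Proposition 3.19, where $AGL(1,4)\cong A_{4}$.)

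For the connection set, note that $H^{*}=\{(0,a)\mid a\in GF(n)^{*},\ a\neq 1\}$ is inverse‑closed with $|H^{*}|=n-2$, and that $s_{0}=(1,-1)$ satisfies $s_{0}^{2}=(0,1)$ and $s_{0}\notin H$; this choice works in every characteristic (in characteristic $2$ one has $s_{0}=(1,1)$, a ``translation''). Set $S=H^{*}\cup\{s_{0}\}$; then $S=S^{-1}$, $(0,1)\notin S$, and $|S|=n-1$, so $\Gamma:=Cay(G;S)$ is a regular graph of order $n(n-1)$ and valency $n-1$.

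It remains to verify the hypotheses of Theorem 3.22 for $\Gamma$. The left cosets of $H$ are the sets $D_{b}=\{(b,a)\mid a\in GF(n)^{*}\}$, $b\in GF(n)$: these are $n$ disjoint $(n-1)$‑subsets covering $G$. A short computation gives $(b,a)^{-1}(b,a')=(0,a^{-1}a')\in H^{*}\subseteq S$ whenever $a\neq a'$, so each $D_{b}$ is an $(n-1)$‑clique. For $b\neq b'$ one computes $(b,a)^{-1}(b',a')=(a(b'-b),\ a^{-1}a')$, and since $a(b'-b)\neq 0$ this lies in $S$ if and only if it equals $s_{0}$, i.e. $a=(b'-b)^{-1}$ and $a'=-(b'-b)^{-1}$; hence $D_{b}$ and $D_{b'}$ are joined by exactly one edge of $\Gamma$. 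By Theorem 3.22, $\Gamma\cong L(B(n,1))$, so $L(B(n,1))$ is a Cayley graph.

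I expect no deep obstacle: the argument is a construction followed by routine arithmetic in $GF(n)$. The only genuinely creative step is the choice of $G$ and $S$, and even this is essentially forced — the necessity direction (Theorem 3.16(iv)) shows that a regular subgroup of $Aut(L(B(n,1)))$ would have to be sharply $2$‑transitive of degree $n$, so one is naturally led to test $AGL(1,n)$. The one point requiring slight care in the write‑up is exhibiting an involution of $G$ lying outside $H$ uniformly in the characteristic, which is precisely what lets $S$ be symmetric while still attaining valency $n-1$.
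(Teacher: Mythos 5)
Your construction is exactly the paper's: the same group $K\rtimes H\cong AGL(1,n)$, the same connection set (the nonidentity elements of the multiplicative subgroup $T$ together with one involution outside it, your $s_0=(1,-1)$ being precisely the paper's $\alpha=(1,f_{-1})$), and the same appeal to the clique-decomposition recognition theorem (which is Theorem 3.19 in this paper, not 3.22). Your explicit coset computations showing that two distinct cliques are joined by exactly one edge are, if anything, a more careful rendering of the step the paper handles by a regularity count, so the proof is correct and essentially identical to the paper's.
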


\begin{proof}
Consider the finite field $GF(n) $ and let $K$ be the group $K=(GF(n),+)$.  For each $ 0 \neq a \in K $, we define the mapping $f_a :K
\longrightarrow K $, by the rule $ f_a(x)=ax, \  x\in K$. Then, $f_a$ is an automorphism of the group $K$ and
 $ H= \{ f_a \  | \ 0 \neq a\in K \}$ is a group (with composition of functions) of order $n-1$ which is isomorphic with  the multiplicative
group of the field $GF(p^m)$.  If we let $G= K \rtimes H$, then $G$ is a well defined group of order $n(n-1)$. Note that $G$ is not an abelian group. Let $T= \{ (0,h) \ | \ h\in H \}$. Then,  $T$ is a subgroup of order $n-1$ in the group $G$ which is isomorphic with $H$ and $[G:T]=n$,  where $[G:T]$ is the index of $T$ in
$G$. Hence, there are elements $b_1,...,b_n$ in $G$ such that $G=b_1T \cup...\cup b_nT, $  and if $i \neq j $, then $ b_iT \cap b_jT= \emptyset $.  Note that $f_{-1} \in H$ and $   { f_{-1}^2=i },   $   where $i$ is the identity element of $H$.
Then, for the element    $ \alpha=(1,f_{-1}) $, we have  \\
$${\alpha}^2= (1,f_{-1})(1,f_{-1})  =  (1+(-1)1, (f_{-1})^2) =    (0,i) =e, $$\\
where $e$ is the identity element of $G$.   Note that $\alpha \notin T$. \

We now let $ \Gamma= Cay(G;S),  $  where $S=(T-\{  e\}) \cup \{ \alpha \}$.  Note that $ S= S^{-1},  $  because $T$ is a subgroup of $G$ and ${\alpha}^2=1$ (and consequently ${\alpha}^{-1}=\alpha \in S$).   Since, in the graph $ \Gamma $ vertices $x,y$ are adjacent if and only if $x^{ -1 }y \in S$,  then for each $i,    1\leq i \leq n, $ the subgraph induced by the set $C_i = b_iT$, is a $(n-1)$-clique in $\Gamma$. Since $\alpha \notin T$, then
$C_i \cap C_i \alpha = \emptyset$. In fact, if $v=b_it= b_it_1 \alpha, \  t_1,  t \in T$, then  $ t= t_1 \alpha$, and hence
$ \alpha \in T$ which is a contradiction. Therefore, if $v\in C_j$, then $v$ is adjacent to $v \alpha$ and   $v \alpha \in C_i,$ for some $ i,  i \neq j$. Since $\Gamma$ is a regular graph of valency
$n-1$, then for each vertex $v\in C_j$ there is exactly one $i, i \neq j$ such that $v$ is adjacent to exactly one  vertex $w$ in $C_i$. Now,  by Theorem 3.19. we conclude  that $\Gamma$
 is isomorphic with $ L(B(n,1))$, and therefore the graph $  L(B(n,1))$ is a Cayley graph. \\
 \end{proof}

We now  discuss on the cayleyness of the graph $L(B(2k+1,k)).$  As we have stated in Remark 3.17. it seems that the method of Theorem 3.18. does not work in this case. In the sequel, we need the following fact  which appeared in Mirafzal [25]. 

\begin{lem} $[25]$ Let $ k > 2 $ be an even integer  such that $k$ is not of the form $ k = 2^t  $  for some $ t \geq 2$,  then
the number  $ { 2k+1}  \choose { k } $ is a multiple of $ 4 $.
\end{lem}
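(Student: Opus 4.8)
The plan is to pin down the exact $2$-adic valuation of $\binom{2k+1}{k}$ and show it is at least $2$. The tool is Legendre's formula: for a prime $p$ and a positive integer $n$, $v_p(n!)=\frac{n-s_p(n)}{p-1}$, where $s_p(n)$ denotes the sum of the base-$p$ digits of $n$. Applying this with $p=2$ (so that $p-1=1$) to $\binom{2k+1}{k}=\frac{(2k+1)!}{k!\,(k+1)!}$ gives at once
\[
v_2\!\left(\binom{2k+1}{k}\right)=s_2(k)+s_2(k+1)-s_2(2k+1).
\]

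First I would observe that $2k+1$ is obtained from $k$ by shifting its binary expansion one place to the left and then setting the new last bit to $1$ (the last bit of $2k$ being $0$), so $s_2(2k+1)=s_2(k)+1$. Substituting, the valuation collapses to the clean identity
\[
v_2\!\left(\binom{2k+1}{k}\right)=s_2(k+1)-1,
\]
and hence it suffices to prove $s_2(k+1)\ge 3$ under the hypotheses.

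Next I would bring in the hypotheses on $k$. Since $k$ is even, write $k=2^a m$ with $a\ge 1$ and $m$ odd; the assumption that $k$ is not of the form $2^t$ forces $m\neq 1$, i.e. $m\ge 3$, and an odd integer that is at least $3$ has binary digit sum at least $2$ (the units bit together with a strictly higher bit). Because $m$ is odd, the binary expansion of $2^a m$ ends in exactly $a$ zeros, so adding $1$ creates no carry and $s_2(k+1)=s_2(2^a m)+1=s_2(m)+1\ge 3$. Therefore $v_2\!\left(\binom{2k+1}{k}\right)=s_2(k+1)-1\ge 2$, which is exactly the assertion that $4\mid\binom{2k+1}{k}$.

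There is no deep obstacle here; the points needing a little care are the two exact (rather than merely inequality) digit-sum identities — that $s_2(2k+1)=s_2(k)+1$ and that adding $1$ to $2^a m$ produces no carry — since it is these that convert the hypothesis $m\ge 3$ into the sharp bound $s_2(k+1)\ge 3$. One could equally run the whole argument through Kummer's theorem, counting the carries when adding $k$ and $k+1$ in base $2$; the same bound then says there are at least two carries, and the exclusion of the powers of $2$ is precisely the case where there is only one.
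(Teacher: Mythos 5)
Your argument is correct and complete. The paper itself gives no proof of this lemma; it is imported verbatim from reference [25] with the proof deferred there, so there is nothing internal to compare against. What you supply is a clean, self-contained derivation: Legendre's formula gives $v_2\bigl(\binom{2k+1}{k}\bigr)=s_2(k)+s_2(k+1)-s_2(2k+1)$, the carry-free identity $s_2(2k+1)=s_2(k)+1$ collapses this to $s_2(k+1)-1$, and the hypotheses on $k$ force $s_2(k+1)\ge 3$. Both of the ``exact'' digit-sum identities you flag are valid (appending a low-order $1$ to $2k$ creates no carry, and adding $1$ to $k=2^a m$ with $a\ge 1$ flips only the units bit), and the final bound checks out numerically, e.g.\ $v_2\bigl(\binom{13}{6}\bigr)=2$. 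One small point of bookkeeping: the lemma only excludes $k=2^t$ for $t\ge 2$, so the case $m=1$ with $a=1$, i.e.\ $k=2$, is eliminated by the separate hypothesis $k>2$ rather than by the power-of-two clause; your sentence ``the assumption that $k$ is not of the form $2^t$ forces $m\neq 1$'' should really invoke both hypotheses together. This is cosmetic and does not affect the proof. Your closing remark is also apt: the identity $v_2\bigl(\binom{2k+1}{k}\bigr)=s_2(k+1)-1$ shows the powers of two are exactly the even $k$ for which the conclusion fails, which explains why the lemma's exclusion is sharp.
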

In the following theorem  we prove,  by an elementary method,  that for \lq{}almost all\rq{} even integers $k$, the line graph of the graph   $B(L(n,k))$ is a vertex-transitive non Cayley graph.

\begin{thm} Let $ k > 2 $ be an even integer  such that $k$ is not of the form $ k = 2^t  $  for some $ t \geq 2$.  Then,  the line graph of the graph   $B(2k+1,k))$, that is, $L(B(2k+1,k))$ is a vertex-transitive non Cayley graph. 

\end{thm}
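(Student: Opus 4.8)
The plan is to assume, for contradiction, that $\Gamma = L(B(2k+1,k))$ is a Cayley graph, so that $\mathrm{Aut}(\Gamma)$ contains a subgroup $R$ acting regularly on $V(\Gamma)$. By Theorem 3.12, $\mathrm{Aut}(\Gamma) \cong \mathrm{Sym}([n]) \times \mathbb{Z}_2$ with $n = 2k+1$, where the $\mathbb{Z}_2$ factor is generated by the automorphism $f(\bar\alpha)$ induced by the complementation map $\alpha$ of $B(n,k)$. The order of $R$ equals $|V(\Gamma)| = |E(B(2k+1,k))| = (k+1)\binom{2k+1}{k+1} = (k+1)\binom{2k+1}{k}$. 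First I would use Lemma 3.22: since $k$ is even, $k>2$, and $k$ is not a power of $2$, the number $\binom{2k+1}{k}$ is a multiple of $4$, hence $|R| = (k+1)\binom{2k+1}{k}$ is divisible by $4$ (in fact by $4(k+1)$). So $R$ has a nontrivial Sylow $2$-subgroup; more precisely $4 \mid |R|$.

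Next I would exploit the structure of $\mathrm{Aut}(\Gamma)$. The key point is that the subgroup $H = \{f_\theta \mid \theta \in \mathrm{Sym}([n])\}$ has index $2$ in $\mathrm{Aut}(\Gamma)$, so $R \cap H$ has index $1$ or $2$ in $R$. If $R \le H$, then $R$ projects isomorphically to a regular subgroup $R_1$ of the action of $\mathrm{Sym}([n])$ on $V(\Gamma)$; but a regular subgroup must be transitive on $V(\Gamma)$, and since $G$ acts on vertices $\{A, Ax\}$ via its action on $(k+1)$-subsets (as in the proof of Theorem 3.16), $R_1$ would be transitive on $(k+1)$-subsets, hence $(k+1)$-homogeneous, and by the argument of Theorem 3.16 (producing a nontrivial stabilizer element via a $(k+1)$-transitive permutation, using Theorems 3.13–3.14 since here $n = 2k+1 \ge 2(k+1)$ fails by one — so instead I would rework the stabilizer argument directly using homogeneity and the parity of $k$) one reaches a contradiction with semiregularity. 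The surviving case is $[R : R\cap H] = 2$, i.e. $R \not\le H$; then $R_1 := R \cap H$ has index $2$ in $R$, order $\tfrac12(k+1)\binom{2k+1}{k}$, still even since $4 \mid |R|$, and $R$ is generated by $R_1$ together with one element of the form $f_\theta \cdot f(\bar\alpha)$.

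The heart of the argument is then to analyse the element $\tau = f_\theta f(\bar\alpha) \in R \setminus R_1$ and its powers. Since $R$ acts regularly (hence freely) on $V(\Gamma)$, no nonidentity element of $R$ fixes a vertex. I would compute the action of $\tau$: it sends a vertex $\{A, Ax\}$ (with $|A| = k$) to $\{\theta(A)^c \setminus \{\theta(x)\} \text{'s complement structure}\}$ — concretely, $f(\bar\alpha)$ sends the edge $\{A, Ax\}$ of $B(n,k)$ to the edge $\{(Ax)^c, A^c\}$, a $k$-set/$(k+1)$-set pair, and then $f_\theta$ permutes. The parity obstruction enters as follows: $\tau^2 = f_{\theta\sigma(\theta)}$ for an appropriate $\sigma$ (using $f(\bar\alpha)^2 = 1$ and the commutation $f_\theta f(\bar\alpha) = f(\bar\alpha) f_{\theta'}$), so $\langle \tau^2 \rangle \le R_1 \le H$. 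Counting orbits of $\langle\tau\rangle$ on $V(\Gamma)$ and comparing with $|V(\Gamma)|/|\tau|$, together with the $2$-divisibility from Lemma 3.22, forces a fixed vertex or an inconsistency in the Sylow structure of $R$ versus that of $\mathrm{Sym}([n]) \times \mathbb{Z}_2$ restricted to an index-$2$ regular subgroup. I expect \emph{this last combinatorial/arithmetic step — pinning down exactly why the $4 \mid \binom{2k+1}{k}$ divisibility is incompatible with the existence of such a regular $R$ — to be the main obstacle}; the clean way is likely to show that $R_1$, being an index-$2$ subgroup of a regular group and sitting inside $H \cong \mathrm{Sym}([n])$, would itself have to be $(k+1)$-homogeneous with a point stabilizer in the $(k+1)$-subset action that is nontrivial (of even order, by the divisibility), contradicting semiregularity of $R$ on $V(\Gamma)$. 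Once that contradiction is in hand, $\Gamma$ cannot be Cayley, while vertex-transitivity is already known since $B(2k+1,k)$ is edge-transitive by Proposition 3.5.
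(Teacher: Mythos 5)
Your setup matches the paper's: assume a regular subgroup $R \leq \mathrm{Aut}(L(B(2k+1,k))) \cong \mathrm{Sym}([n]) \times \mathbb{Z}_2$ with $n=2k+1$, note that $|R| = (k+1)\binom{2k+1}{k}$ is divisible by $4$ by the divisibility lemma, and split according to whether $R$ is contained in the index-two subgroup $H = \{f_\theta \mid \theta \in \mathrm{Sym}([n])\}$ or meets the coset $H\alpha$. But at the decisive moment you stop: you say you expect pinning down why $4 \mid |R|$ is incompatible with regularity to be ``the main obstacle,'' and you offer only speculation (orbit counting for $\langle\tau\rangle$, Sylow-structure inconsistencies, or homogeneity of $R\cap H$ with an even-order point stabilizer). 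None of these is the argument that works, and the step you have left open is the entire content of the theorem. The paper's resolution is elementary: in either case the subgroup $M_1 = R \cap H$ has even order (it equals $R$, or has index $2$ in $R$, and $4$ divides $|R|$), so by Cauchy's theorem $M_1$ contains an element $f_\theta$ of order $2$, i.e.\ $\theta$ is an involution of $\mathrm{Sym}([2k+1])$. Write $\theta$ as a product of $a$ transpositions and $b$ fixed points with $2a+b = 2k+1$; then $b$ is odd, hence $b \geq 1$. Now one exhibits a fixed vertex: if $2a \leq k$, let $u$ be the union of the supports of all $a$ transpositions together with $k-2a$ fixed points of $\theta$; if $2a > k$, use that $k = 2m$ is even and let $u$ be the support of $m$ of the transpositions. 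In both cases $u$ is a $\theta$-invariant $k$-subset and some fixed point $i_b$ lies outside $u$, so the vertex $\{u,\, u\cup\{i_b\}\}$ is fixed by $f_\theta \neq 1$, contradicting regularity. This is precisely where the hypothesis that $k$ is even is used, and your proposal never locates that use.

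A secondary point: in the case $R \leq H$ you correctly observe that $n = 2k+1 < 2(k+1)$ blocks a direct appeal to Theorems 3.13--3.14 for $(k+1)$-homogeneity, and you propose to ``rework the stabilizer argument'' without saying how. The paper is also terse there (it refers back to the method of Theorem 3.16), but note that the Cauchy/fixed-vertex argument above already disposes of that case, since then $M_1 = R$ has order divisible by $4$; no homogeneity machinery is needed anywhere in this theorem.
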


\begin{proof}\ We  let $n=2k+1. $   We know by Theorem 3.10. and Theorem 3.12.   that \newline
 $ Aut(L(B(n,k)))
=  S =\{  f_{\gamma} \alpha^i\ |   \   \gamma \in Sym([n]), 0\leq i \leq 1 \} ( \cong Sym([n]) \times \mathbb{Z}_2 )$, where $ \alpha $ and $f_{\gamma}$ are automorphisms of the graph $\Gamma  = L(B(n,k))$ which are defined in Theorem 3.10.
   Suppose that
 $ \Gamma$ is a Cayley graph.  Then,  $ Aut(\Gamma)$  has a subgroup $ R
 $ such that $R$
   acts regularly on  the set $ V(\Gamma)$.
 Then $ |R| = |V(\Gamma)|$  = ${n \choose k}$ $(k+1)$ =$ \frac{n!}{(k!)(n-k)!}$$(k+1)$.  Since  $k$ is not of the form $ k = 2^t  $  for some $ t \geq 2$,  then by Lemma 3.21. 
the number  $ { 2k+1}  \choose { k } $ = $ { n}  \choose { k } $ is a multiple of $ 4 $, hence $|R|$ is a multiple of 4. 
 If $ r$  is an element of $ R $, then by Theorem 3.10.  $ r$ has a form such as $ f_{ \sigma} \alpha^i $,
where $  \sigma \in Sym([n]) $ and   $ i \in \{ 0,1\}$. We know  that $\alpha f_{\sigma} = f_{\sigma} \alpha$, for every $ \sigma \in Sym([n]) . $  If $f_{ \sigma} \alpha^i \in R $, then 
  $$ (f_{ \sigma} \alpha^i) (f_{ \sigma} \alpha^i)=f_{ \sigma}f_{ \sigma}(\alpha^i)^2 ={f_{ \sigma}}^2 =f_{{ \sigma}^2}  \in R.$$
Then there are  elements of the form $ f_{ \theta}$, $\theta \in Sym([n]) $ in $R$.
     Let $ M_1 =  \{f_{ \phi} \ | \
f_{ \phi} \in R \}$.  We can easily see that $ M_1 $ is a subgroup of $ R $. In this step, there are two cases, that is,  (a) $M_1=R$, and (b) $M_1\neq R.$\

    (a) If $M_1= \{f_{ \phi} \ | \
f_{ \phi} \in R \} =   R, $ then, by using the method which was used in  Theorem 3.16. we can show that the graph $L(B(2k+1,k))$ is a vertex transitive non Cayley graph. 

(b)  We now assume that $M_1 \neq R.$  Hence,   $ R  $ contains
elements of the form $f_{ \theta} \alpha. $ 
 We let  $ M_2$ =$
\{
 f_{\theta}\alpha \  |  \ f_{\theta}\alpha  \in R \}.$ Let $f_{ \theta_0}\alpha$ be a fixed element of $M_2. $
  Then $ M_2 f_{\theta_0}\alpha  \subseteq M_1 $,
 because $ (f_{ \theta} \alpha)( f_{\theta_0}\alpha ) $ = $  f_{ \theta}f_{\theta_0} (\alpha)^2$ = $  f_{ \theta}f_{\theta_0}$ =  $  f_{ \theta\theta_0}.$
 Then, $ | M_2| \leq |M_1| $. Since $ M_1 f_{ \theta_0} \alpha   \subseteq M_2
 $, then $ |M_1| \leq |M_2|$, and hence $ |M_1| = |M_2| = (1/2) |R|$= $\frac{1}{2}$${n \choose k}$$(k+1).$ Now since $|R|$ is a multiple of 4, then $|M_1|$ is an even integer.    Since $|M_1|$ is an even integer, then $ 2 $ divides $|M_1| $.  Therefore,   by the Cauchy's
theorem the group $ M_1 $ has an element $ f_{\theta} $  of order $ 2 $ where $\theta \in Sym([n])$. Note that the orders of $\theta$ and $f_{\theta} $ are identical, so $\theta$ is of order 2 in the group $Sym([n])$.
We know that each element of $ Sym([n])$ has a unique factorization into disjoint
cycles of $ Sym([n]) $, hence
we can write $ \theta = \rho_1 \rho_2 ... \rho_{h} $, where
each $ \rho_i $ is  a cycle  of $ Sym ( [n] ) $ and
 $ \rho_i \cap \rho_j ={\emptyset} $
when $ i \neq j $. We also know that if $ \theta = \rho_1 \rho_2 ... \rho_{h} $, where
each $ \rho_i $ is  a cycle of $ Sym ( [n] ) $ and
 $ \rho_i \cap \rho_j ={\emptyset} $, then the order
 of the permutation $ \theta $ is the least common multiple
 of the integers,  $ | \rho_1 |, \  |  \rho_2 |,
   ... , | \rho_h  | $.     Since $ \theta $ is of order $ 2 $, then the order
of each $ \rho_i $ is $ 2 $ or $ 1 $, namely,  $ | \rho_i | \in \{1, 2 \} $.
In   other words,  each $ \rho_i $ is a transposition or a cycle of length $ 1 $. \
Let $ \theta = \tau_1 \tau_2 ... \tau_a ( i_1) (i_2) ... (i_b) $,  where
each $ \tau_r $ is a transposition and each $ i_s \in [n] $.  Therefore, we have $ 2a+b =n=2k + 1 $, where $b$ is an odd integer, and hence it is non-zero.
Since $ b  $ is a positive odd integer, then $ b-1$ is an
 even integer. we let $ d= \frac { b-1} {2}$, so that $ d $ is a non-negative
 integer,  $ d < b$ and $ k= a+d $. Let $ \tau_r = (x_r y_r )$, $ 1 \leq r \leq a$,  where $ x_r, y_r \in [n] $.
 Now,  there are two cases: 
  (i) \  \    $ 2a \leq k$, \  \ (ii) \ \ $ 2a > k$.

 (i) \ \ Suppose   $ 2a \leq k$. Then there is some non-negative integer $t$
 such that $ 2a+ t =k$.  
   Thus,   for transpositions
 $ \tau_1,  \tau_2,  ...,  \tau_{a}$ and cycles $ ( i_1),...,( i_t )$ of
 the cycle factorization
 of $ \theta $, the set  $ u= \{ x_1, y_1,..., x_a, y_a,  i_1,  i_2,  ...,  i_t    \} $
  is a $k $ subset of the set $ [n]$. Hence, $v =  \{  u,u\cup\{ i_b\} \}$ is a vertex of the graph $\Gamma = L(B(n,k))$.    Therefore,   we have; 
 $$ f_{\theta}( v ) = \{ \{ \theta ( x_1), \theta ( y_1 ), ..., \theta ( x_{a}),\theta ( y_{a}),\theta ( i_1),..., \theta ( i_t)\},$$ 
$$\{ \theta ( x_1), \theta ( y_1 ), ..., \theta ( x_{a}),\theta ( y_{a}),\theta ( i_1),..., \theta ( i_t), \theta ( i_b)\}\}= $$
 $$ \{ \{ y_1, x_1, ..., y_{a}, x_{a}, i_1,  i_2,  ...,  i_t \},\{ y_1, x_1, ..., y_{a}, x_{a}, i_1,  i_2,  ...,  i_t,i_b \}\}=$$ $$  \{  u,u\cup\{ i_b\} \} = v$$ 
 \  \ \ \ \ (ii) \   Suppose $ 2a > k$. We know by the assumption that $k$ is an even integer. Let $k=2m$, where $m \geq 2$ is an integer. Thus $2a > 2m$ and hence $ a > m$.   Then by transpositions $ \tau_1,  \tau_2,  ...,  \tau_{m}$ we can construct   the $k$-subset $ u= \{ x_1, y_1,..., x_m, y_m\} $ of $[n]$,  and hence $v =  \{  u,u\cup\{ i_b\} \}$ is a vertex of the graph $\Gamma = L(B(n,k))$, thus we have,    
$$ f_{\theta}( v ) =\{ \{ \theta ( x_1), \theta ( y_1 ), ..., \theta ( x_m), \theta (y_m)\},$$
 $$\{ \theta ( x_1), \theta ( y_1 ), ..., \theta ( x_m), \theta (y_m),\theta ( i_b)\} \}=
  \{  u,u\cup\{ i_b\} \} = v $$
 
From the above argument, it follows that the automorphism $ f_{\theta} $, which is an element of the subgroup $R$,  fixes a vertex of the   graph $ L(B(n,k)) $  which is a contradiction, because $R$ acts regularly on the vertex-set of $ L(B(n,k)) $ whereas  $ f_{\theta} $ is not the identity element.
\end{proof}

 \section{ Conclusion}
 In this paper, we determined the automorphism group of the graph $ {Q_n}(k,k+1) $,   where $ {Q_n}(k,k+1) $  is the subgraph of the hypercube $Q_n$  which is induced by the set of vertices of weights $k$ and $k+1$, for all $n >3$ and $  0< k < \frac{n}{2} $.  Then, we studied some algebraic properties of the line graph of these graphs. In particular,  we proved that    if $k\geq 3$  and $  n \neq 2k+1$, then  except for the cases $k=3, n=9$ and $k=3, n=33$, the line graph of the graph $ {Q_n}(k,k+1)  $ is a vertex-transitive non Cayley graph. Also, we showed that the line graph of the graph   $ {Q_n}(1,2) $ is a Cayley graph if and only if $ n$ is  a power of a prime $p$. Moreover, we showed that if $k$  is an integer such that it is not a power of 2, then the line graph of the graph   $ {Q_{2k+1}}(k,k+1) $ is a vertex-transitive non-Cayley graph.   On the other hand,  in the following cases, we do not know whether   the line graph of the graph ${Q_n}(k,k+1)$ is a Cayley graph; \\
(1) $  k=2 $, and $n- 1 = q \equiv 3$ (mod \ 4),   where  $q$ is a power of a prime integer, \\
(2) $k = 3$ and $n\in \{9,33 \}$, \\
(3) $k = 2^t$ or $k$ is an odd integer,  when $n=2k+1$.

\end{document}